\date{\today}
\newcommand{\CZ}{\mathcal{Z}}
\theoremstyle{theorem}
    \newtheorem{theorem}{Theorem}
    \newtheorem{lemma}[theorem]{Lemma}
    \newtheorem{corollary}[theorem]{Corollary}
\theoremstyle{definition} 
    \newtheorem{definition}[theorem]{Definition}
    \newtheorem{result}[theorem]{Result}
    \newtheorem{remark}[theorem]{Remark}
    \newtheorem{example}[theorem]{Example}
    \newtheorem{exercise}[theorem]{Exercise}
\def\C{\mathbb{C}}
\def\CL{\mathcal{L}}
\def\R{\mathbb{R}}
\def\f{{\bf f}}
\def\l{\left}
\def\r{\right}
\def\<{\langle}
\def\>{\rangle}
\newcommand{\E}{\mbox{\bf E}}
\def\P{{\bf P}}
\newcommand\Tr{{\mbox{Tr}}}
\newcommand\mnote[1]{} 
\newcommand\be{\begin{equation*}}
\newcommand\ee{\end{equation*}}
\newcommand\ben{\begin{equation}}
\newcommand\een{\end{equation}}
\newcommand\bes{\begin{eqnarray*}}
\newcommand\ees{\end{eqnarray*}}
\newcommand\bex{\begin{exercise}}
\newcommand\eex{\end{exercise}}
\newcommand\beg{\begin{example}}
\newcommand\eeg{\end{example}}
\newcommand\benu{\begin{enumerate}}
\newcommand\eenu{\end{enumerate}}
\newcommand\beit{\begin{itemize}}
\newcommand\eeit{\end{itemize}}
\newcommand\berk{\begin{remark}}
\newcommand\eerk{\end{remark}}
\newcommand\bdefn{\begin{defintion}}
\newcommand\edefn{\end{definition}}
\newcommand\bthm{\begin{theorem}}
\newcommand\ethm{\end{theorem}}
\newcommand\bprf{\begin{proof}}
\newcommand\eprf{\end{proof}}
\newcommand\blem{\begin{lemma}}
\newcommand\elem{\end{lemma}}
\newcommand{\var}{\mbox{\rm Var}}
\newcommand{\sm}{{\raise0.3ex\hbox{$\scriptstyle \setminus$}}}
\def\l{\left}
\def\r{\right}
\def\CHI{\mathchoice%
{\raise2pt\hbox{$\chi$}}%
{\raise2pt\hbox{$\chi$}}%
{\raise1.3pt\hbox{$\scriptstyle\chi$}}%
{\raise0.8pt\hbox{$\scriptscriptstyle\chi$}}}
\def\smalloplus{\raise1pt\hbox{$\,\scriptstyle \oplus\;$}}
\newtheorem{cor}[theorem]{Corollary}
\title[CLT for random matrices with a variance profile]{Linear eigenvalue statistics of random matrices with a variance profile}
\author{Kartick Adhikari}
\address{Department of Electrical Engineering\\
        Technion - Israel Institute of Technology\\
        Haifa, 32000, Israel}
\email{kartickmath [at] gmail.com}
\author{Indrajit Jana}
\address{Department of Mathematics\\
Temple University\\
Philadelphia, PA, USA, 19122}
\email{ijana [at] temple.edu}
\author{Koushik Saha}
\address{Department of Mathematics\\
        Indian Institute of Technology Bombay\\
         Powai, Mumbai  400076, India}
\email{koushik.saha [at] iitb.ac.in}
\date{\today}
\thanks{The work of Koushik Saha is partially supported by Seed grant of IIT Bombay. The research of Kartick Adhikari supported  in part by Israel Science Foundation, Grant 2539/17 and Grant 771/17,  and by SERB (reference no. PDF/2016/001601) India.}
\begin{document}
\maketitle
\begin{abstract}
We give an upper bound on the total variation distance between the    
linear eigenvalue statistic, properly scaled and centred, of a random matrix with a variance profile and the standard Gaussian random variable. The second order Poincar\'e inequality type result introduced  in \cite{chatterjee2009fluctuations} is used to establish the bound.  Using this bound we prove  Central limit theorem for linear eigenvalue statistics of random matrices with different kind of variance profiles. We re-establish some existing results on fluctuations of linear eigenvalue statistics of some well known random matrix ensembles by choosing appropriate variance profiles.
\end{abstract}
\noindent{\bf Keywords :} Linear eigenvalue statistics, random matrix, variance profile, Central limit theorem, Band matrix, sparse matrix, Erd\"os-R\'enyi random graph, patterned random matrix.

\section{Introduction} 
Let $\lambda_1,\lambda_2,\ldots, \lambda_n$ be the eigenvalues of an $n\times n$ matrix $M$ with real or complex entries. The {\it linear eigenvalue  statistic} of $M$ is a function of the form
$$
\sum_{k=1}^{n}f(\lambda_k)
$$
where $f$ is a fixed function. The function $f$ is known as the {\it test function}. 
In this article we study the fluctuations of linear eigenvalue statistics  of random
matrices of the following form
\begin{equation}\label{definition:variance profile}
Y=A \circ X,
\end{equation}
where $\circ$ is the matrix Hadamard product, $A$ is an $n\times n$ deterministic matrix with non-negative entries $a_{ij}$ and $X$ is an $n\times n$ random matrix. 
In literature $A$ is referred as the {\it standard deviation profile}, and  $A\circ A$ is  referred as the {\it variance profile}. We call $Y$ as a random matrix with a variance profile $A\circ A$.  If $X$ is symmetric (Hermitian) and $A$ is symmetric, then $Y$ is symmetric (Hermitian) with a variance profile $A\circ A$.

Random matrices  and random matrices with  variance profiles  have been used in different areas of sciences, for instance, in ecology to study the stability of an ecological system with differnt species, and in  neuroscience to model networks. For an overview, we refer to \cite{may1972will}, \cite{allesina2015stability}, \cite{allesina2015predicting}, \cite{cohen_newman_1985}, \cite{somopolinsky1988}, \cite{aljadeff_renfrew_stern_2015}, \cite{aljadeff2015transition} and \cite{rajan2006eigenvalue}. 

In recent years there have been increasing interest to study random matrices with a variance profile. For results on Hermitian matrices with a variance profile, see \cite{dimitri_1996}, \cite{girko_vol1_2001}, \cite{guionnet_2002}, \cite{anderson2006clt}, \cite{hachem_loubaton_najim_2007}, \cite{jana2014}, \cite{li2013central} and  \cite{shcherbina2015}. 
Recently  Cook et. al. \cite{cook2016limiting} have studied the limiting spectral distribution of non-Hermitian  random matrices with different types of variance profile matrices. In this article, our goal is to study the fluctuations of  linear eigenvalue statistics of random  matrices with different type of variance profiles for  polynomial test functions.  We investigate the convergence of the fluctuations of linear eigenvalue statistics in total variation norm. 

The literature on linear eigenvalue statistics is quite large. To the best of our knowledge, 
the fluctuations of linear eigenvalue statistics  was first considered  by Arharov \cite{arharov} in 1971 for sample covariance matrices. 
In 1982, Jonsson \cite{jonsson} proved the Central limit theorem
(CLT) type results  of linear eigenvalue statistics for Wishart matrices using method of moments.   
In last three decades the fluctuations of linear eigenvalue statistics have become one of the popular field  of research in random matrix theory.  There are several results on the fluctuations of linear eigenvalue statistics of random matrix ensembles of different type.  
To get an overview on the  results on  Wigner and sample covariance matrices,   we refer the readers to   \cite{johansson1998}, \cite{soshnikov1998tracecentral}, \cite{bai2004clt}, \cite{lytova2009central}, \cite{shcherbina2011central}, \cite{sosoe_wong_2013} and the reference there in.  For   band and sparse symmetric random matrices, see  \cite{anderson2006clt},  \cite{jana2014}, \cite{li2013central},  \cite{shcherbina2015}. For Toeplitz, Hankel and circulant matrices, see  \cite{chatterjee2009fluctuations}, \cite{liu2012}, \cite{adhikari_saha2017} and \cite{adhikari_saha2018}, and for non-Hermitian matrices, see \cite{rider_silverstein_2006}, \cite{nourdin_peccati_2010} and \cite{rourke_renfrew_2016}.

In this article, we derive a simple  bound  on the total variation distance between the linear fluctuations, for the polynomial test functions, of $A\circ X$ and the standard Gaussian variable. The bound is given  
 in terms of the entries of $A$, the degree of the polynomial and the dimension of $X$. see Theorem \ref{thm: Non-symmetric matrix - Total variation}. Using Theorem \ref{thm: Non-symmetric matrix - Total variation}, we establish the Central limit theorem for the linear eigenvalue statistics of $A\circ X$ with different kind of variance profile matrices. For instance, in Corollary \ref{cor: for Erdos - Renyi graph}   we studied the fluctuation of $A\circ X$ when $A$ is an adjacency matrix of an Erd\"os-R\'enyi random graph. We re-establish some  existing results on the fluctuations of linear eigenvalue statistics of random matrices very easily with the appropriate choices of $A$. For example, in Corollary \ref{cor:xx'}, we showed that the fluctuation of linear eigenvalue statistics of sample covariance matrix is Gaussian. In Corollary \ref{cor:productofmatrices}, we established that the fluctuation of linear eigenvalue statistics of finite product of independent copies of i.i.d. matrices is Gaussian. We also derived the fluctuations of linear eigenvalue statistics for diagonal, anti-diagonal and sparse random matrices using Theorem \ref{thm: Non-symmetric matrix - Total variation}. see  Corollary \ref{iid_variance_band}, Corollary \ref{cor_iid_sparse} and Remark \ref{anti-diagonal}.

Now, we  briefly outline the rest of the article. In Section \ref{notation}, we introduce the notations frequently used in this article. In Section \ref{main result}, we introduce the assumptions on the  test functions, on the  entries of $X$, and  state our main result, Theorem \ref{thm: Non-symmetric matrix - Total variation}. In Section \ref{sec:example}, we derive some new results and re-establish some existing results on fluctuations of linear eigenvalue statistics  from Theorem \ref{thm: Non-symmetric matrix - Total variation}. In Section \ref{proof of main result}, we give the proof of Theorem \ref{thm: Non-symmetric matrix - Total variation} and in Section \ref{preliminary lemmas}, we collect the results needed to prove Theorem \ref{thm: Non-symmetric matrix - Total variation}. In Section \ref{sec: Appendix}, we give definitions of some variance profiles  to make this article self contained.

\subsection{Notations}\label{notation} Here we introduce some basic definitions and  notations used in this article.
\begin{enumerate}[(i)]
	\item Let $\mathcal{B}(\R)$ be the Borel sigma algebra on $\R$. Then we define  $d_{TV}(\mu, \nu):=\sup_{B\in \mathcal{B}(\R)}|\mu(B)-\nu(B)|$ the total variation distance between two probability measures $\mu$ and $\nu$ on the real line.
	\item Unless otherwise specified, all matrices are $n\times n$ square matrices with growing $n$. We suppress the subscript $n$ to avoid notational complexity. 
	\item Let $A=(a_{ij})_{n\times n}$ and $B=(b_{ij})_{n\times n}$ be two matrices of same dimension. Then $A\circ B$ denotes the standard Hadamard product, that is, $(A\circ B)_{ij}=a_{ij}b_{ij}$.
	\item $\{e_{1},e_{2},\ldots,e_{n}\}$ is the canonical basis of $\R^{n}$.
	\item $[n]=\{1,2,\ldots, n\}$.
	\item $I_k=\{(i_1,\ldots,i_k)\in[n]^k\; :\;  i_{p}\neq i_{q},\;\mbox{for all}\;p\neq q\}$.
	\item Let $\{a_{n}\}_{n}$ and $\{b_{n}\}_{n}$ be two sequence of non-negative real numbers. We write $a_{n}\lesssim b_{n}$ if there exists $c>0$ such that $a_{n}\leq cb_{n}$ for all $n$. We write $a_{n}\sim b_{n}$ if $\lim_{n\to\infty}(a_{n}/b_{n})=1$.
\end{enumerate}

\section{Main result}\label{main result}
 Let $P_k(x)=\sum_{i=0}^{k}c_ix^i$, where $c_{k}\neq 0$ and $c_{i}\in \R\cap [-\tau, \tau],\;\;\mbox{ for all} \; 1\leq i\leq k$, for some $k$-independent constant $\tau>0.$ In other words, the coefficients of the polynomial remain bounded even when the degree of the polynomial goes to infinity. In this article, we shall consider this type of polynomial test functions only. Now define
  \begin{align*}
 \CZ_{k}(A\circ X)=\frac{\Tr(P_k(A\circ X ))-\E[\Tr(P_k(A\circ X))]}{\sqrt{\var(P_k(A\circ X))}},
 \end{align*}	
where $A=(a_{ij})$ is an $n\times n$ deterministic matrix with non-negative $a_{ij}$ and $X=(X_{ij})$ is an $n\times n$ random matrix.  

In our results,  $X$ is either an \textit{i.i.d. random matrix} or a \textit{symmetric random matrix}. By i.i.d. random matrix we mean  $\{X_{ij}; \; i,j\geq 1\}$ are i.i.d. random variables, and by symmetric random matrix we mean  $X_{ji}=X_{ij}$ and $\{X_{ij}; \; 1\leq i\le j \}$ are i.i.d. random variables.

We show that $\CZ_{k}(A\circ X)$ converges to the standard Gaussian distribution in total variation norm when $X$ is an i.i.d. or a symmetric random matrix and  $X_{ij}$ belongs to a specific class of distributions, namely, $\mathcal L(c_1,c_2)$ for some $c_1, c_2>0$.   

For each $c_1,c_2>0$, $\mathcal L(c_1,c_2)$ is a class of probability measures on $\mathbb R$ that arise as laws of random variables like $u(Z)$, where $Z$ is a standard Gaussian random variable and $u$ is a twice continuously differentiable function such that for all $x\in \mathbb R$
$$
|u'(x)|\le c_1 \mbox{ and } |u''(x)|\le c_2.
$$ 
For example, the standard Gaussian random variable is in $\mathcal L(1,0)$. The uniform distributed random variable on $[0,1]$ is in $\mathcal L((2\pi)^{-1/2}, (2\pi e)^{-1/2})$.  To the best of our knowledge, the linear eigenvalue statistics with this class of random variables was first considered in \cite{chatterjee2009fluctuations}.

\begin{theorem}\label{thm: Non-symmetric matrix - Total variation}
	 Let $X=(X_{ij})$ be an $n\times n $  i.i.d.  (respectively, symmetric) random  matrix, where $X_{ij}$ are symmetric random variables with variance one  and  $X_{ij}\in \mathcal L(c_1,c_2)$ for some $c_1,c_2>0$. 
	 Let  $A=(a_{ij})$ be an $n\times n$ deterministic (respectively, symmetric) matrix with non-negative entries and $$b_n=\max_{i,j}\Big\{\sum_{k=1}^na_{ik}^2, \sum_{k=1}^na_{kj}^2,\log n\Big\}.$$ Then
	\begin{align}\label{eqn:non symmetric:TV}
	d_{TV}\l(\CZ_{k}(A\circ X),Z\r) \lesssim \frac{(\max \{|a_{ij}|\})^2k^5\sqrt {n}b_n^{k-1}}{\sum_{I_k}a_{i_1i_2}^2a_{i_2i_3}^2\cdots a_{i_ki_1}^2},
	\end{align}  
	where $Z$ is a standard Gaussian random variable and $I_k=\{(i_1,\ldots,i_k)\in[n]^k\; :\;  i_{p}\neq i_{q},\;\mbox{for all}\;p\neq q\}$.
\end{theorem}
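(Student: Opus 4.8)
The plan is to realise $\CZ_k(A\circ X)$ as a normalised twice-differentiable function of independent standard Gaussians and apply the second order Poincar\'e inequality of \cite{chatterjee2009fluctuations}. Since $X_{ij}\in\mathcal L(c_1,c_2)$ we write $X_{ij}=u(U_{ij})$ with $\{U_{ij}\}$ i.i.d.\ standard Gaussian (obeying $U_{ij}=U_{ji}$ in the symmetric case), $u\in C^2$, $|u'|\le c_1$, $|u''|\le c_2$. Put $Y=A\circ X$ and $h(U)=\Tr(P_k(Y))=\sum_{m=1}^k c_m\Tr(Y^m)$, so $\CZ_k(A\circ X)=(h-\E h)/\sqrt{\var h}$. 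Chatterjee's inequality then gives
\[
d_{TV}\bigl(\CZ_k(A\circ X),Z\bigr)\;\lesssim\;\frac{1}{\var\bigl(P_k(A\circ X)\bigr)}\,\bigl(\E\|\nabla h\|^4\bigr)^{1/4}\bigl(\E\|\nabla^2 h\|_{\mathrm{op}}^4\bigr)^{1/4},
\]
the constants from the change of variables being absorbed into the implied constant. Thus it suffices to bound the two fourth moments from above and $\var(P_k(A\circ X))$ from below; we may assume the denominator $\sum_{I_k}a_{i_1i_2}^2\cdots a_{i_ki_1}^2$ of \eqref{eqn:non symmetric:TV} is positive, the estimate being trivial otherwise.

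For the derivatives I would use the matrix-calculus identities $\partial_{Y_{ab}}\Tr(Y^m)=m(Y^{m-1})_{ba}$ and $\partial^2_{Y_{ab}Y_{cd}}\Tr(Y^m)=m\sum_{l=0}^{m-2}(Y^l)_{bc}(Y^{m-2-l})_{da}$, the chain rule contributing a factor $a_{ab}u'(U_{ab})$ per differentiation together with a diagonal Hessian term $a_{ab}^2u''(U_{ab})\sum_m c_m m(Y^{m-1})_{ba}$ (in the symmetric case one also collects the two occurrences of each off-diagonal variable, costing a bounded factor). Using $|u'|\le c_1$, $|u''|\le c_2$, $|c_m|\le\tau$, the gradient is controlled entrywise: $\|\nabla h\|^2\lesssim (\max_{ij}a_{ij})^2\,k\tau^2\sum_m m^2\|Y^{m-1}\|_{\mathrm{HS}}^2$, and $\E\|Y^{m-1}\|_{\mathrm{HS}}^4$ is estimated directly by the trace-moment method (a sum over closed walks with edge weights $a_{ij}^2$) as $\lesssim n^2 b_n^{2(m-1)}$. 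The Hessian is the crux: $\nabla^2 h$ is an $n^2\times n^2$ matrix whose entrywise bound is far too crude, so I would evaluate its bilinear form. With $\widetilde V=(A\circ u'(U))\circ V$ and $\widetilde W=(A\circ u'(U))\circ W$ the off-diagonal part is $\langle V,\nabla^2 h\,W\rangle=\sum_m c_m m\sum_{l=0}^{m-2}\Tr(\widetilde V Y^l\widetilde W Y^{m-2-l})$, and the bound $|\Tr(\widetilde V Y^l\widetilde W Y^{m-2-l})|\le\|Y\|_{\mathrm{op}}^{m-2}\|\widetilde V\|_{\mathrm{HS}}\|\widetilde W\|_{\mathrm{HS}}\le c_1^2(\max_{ij}a_{ij})^2\|Y\|_{\mathrm{op}}^{m-2}\|V\|\|W\|$, combined with the diagonal term, yields $\|\nabla^2 h\|_{\mathrm{op}}\lesssim k^3(\max_{ij}a_{ij})^2(1+\|Y\|_{\mathrm{op}})\|Y\|_{\mathrm{op}}^{k-2}$.

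The last ingredient is moment control of $\|Y\|_{\mathrm{op}}$. Since each $X_{ij}=u(U_{ij})$ is a $c_1$-Lipschitz image of a Gaussian, $Y=A\circ X$ is a random matrix with variance profile $A\circ A$ whose row and column sums are all at most $b_n$, and the operator-norm bounds available for such matrices (the $\log n$ floor in $b_n$ absorbing the fluctuation term) give $\E\|Y\|_{\mathrm{op}}^{4(k-1)}\lesssim b_n^{2(k-1)}$ in the range of parameters where \eqref{eqn:non symmetric:TV} is not already trivial. Inserting the gradient and Hessian estimates into Chatterjee's inequality and collecting the powers of $\max_{ij}a_{ij}$, $k$, $\sqrt n$ and $b_n$ then reproduces the numerator of \eqref{eqn:non symmetric:TV}.

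For the variance I would use $\var(P_k(Y))\ge\Cov(\Tr P_k(Y),V)^2/\var(V)$ with the explicit degree-$k$ statistic $V=\sum_{(i_1,\dots,i_k)\in I_k}a_{i_1i_2}X_{i_1i_2}\cdots a_{i_ki_1}X_{i_ki_1}$ (centred where necessary): because the law of $X_{ij}$ is symmetric, all odd moments vanish, so among $\Tr(Y),\dots,\Tr(Y^k)$ only $\Tr(Y^k)$ has nonzero covariance with $V$, and both $\Cov(\Tr Y^k,V)$ and $\var(V)$ equal, up to a $k$-dependent constant, the ``single $k$-cycle'' sum $\sum_{I_k}a_{i_1i_2}^2\cdots a_{i_ki_1}^2$. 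Hence $\var(P_k(Y))\gtrsim\sum_{I_k}a_{i_1i_2}^2\cdots a_{i_ki_1}^2$, and dividing gives \eqref{eqn:non symmetric:TV}. The main obstacle is the Hessian operator-norm estimate, where the gain of the trace identity over the naive entrywise bound is exactly what produces the factor $\sqrt n$ (rather than $n$) in the numerator; a secondary difficulty is the bookkeeping required to dovetail the moment bounds for $\|Y^{m-1}\|_{\mathrm{HS}}$ and $\|Y\|_{\mathrm{op}}$ and to verify that the cyclic term dominates the variance.
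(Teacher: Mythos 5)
Your proposal follows the same route as the paper: Chatterjee's second-order Poincar\'e inequality for Gaussian functionals, a Bandeira--van~Handel type operator-norm bound $\|A\circ X\|\lesssim\sqrt{b_n}$, and a lower bound on $\var(\Tr P_k(A\circ X))$ by the distinct-index $k$-cycle sum. The one place you diverge in a substantive way is the variance step. The paper proves (Lemma~\ref{lem:variance}) only that $\var(\Tr(A\circ X)^k)\geq\sum_{I_k}a_{i_1i_2}^2\cdots a_{i_ki_1}^2$, using the pointwise nonnegativity $\E[X^{\alpha+\beta}]\geq\E[X^\alpha]\E[X^\beta]$ together with $a_{ij}\geq 0$, and then substitutes this into the denominator; since the $c_m$ can have arbitrary signs, this monomial lower bound does not by itself control $\var(\Tr P_k(A\circ X))$, so the passage from Lemma~\ref{lem:variance} to the theorem's denominator is left implicit. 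Your Cauchy--Schwarz argument with the auxiliary cyclic statistic $V$ --- observing that $\Cov(\Tr(Y^m),V)=0$ for $m<k$ because $V$ lives in the top homogeneous chaos, and both $\Cov(\Tr(Y^k),V)$ and $\var(V)$ are comparable to the cycle sum --- is a clean way to obtain $\var(P_k(Y))\gtrsim c_k^2\sum_{I_k}\prod a^2$ directly, and actually repairs this gap. On the other side, you rederive the matrix version of the second-order Poincar\'e inequality by hand (bounding $\nabla h$ and $\|\nabla^2 h\|_{\mathrm{op}}$ via the trace identity and the chain-rule/Hessian split into $u'$ off-diagonal and $u''$ diagonal parts), while the paper simply invokes \cite[Theorem~3.1]{chatterjee2009fluctuations} and computes the quantities $\gamma_0,\gamma_1,\gamma_2,\eta_i,\kappa_i$; the content is the same, and your computation also reproduces the exponents. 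One small point you should make explicit (the paper is also casual here) is that the bounds $\E\|\nabla h\|^4$ and $\E\|\nabla^2 h\|_{\mathrm{op}}^4$ require an actual moment bound for $\|Y\|_{\mathrm{op}}^{4(k-1)}$, obtained by integrating the sub-Gaussian tail in Lemma~\ref{Lemma: Norm of the Matrix}, rather than the almost-sure statement $\lambda_n\leq\sqrt{b_n}$ substituted inside an expectation; you acknowledge this, whereas the paper writes ``almost surely'' next to the $\kappa_i$, which are deterministic quantities.
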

Observe that, when the degree of the polynomial $k$ is fixed, the factor $k^5$ in  \eqref{eqn:non symmetric:TV} can be absorbed in the implying constant  of `$\lesssim$'. We kept $k^5$ in the expression as  we will change the degree of the polynomial with $n$. 

To prove the theorem we use second order Poincar\'e inequality type result introduced in \cite[Theorem 3.1]{chatterjee2009fluctuations}. Suppose $X=(X_1,X_2,\ldots,X_n)$ is a vector of independent standard Gaussian random variables and $g:\mathbb R^n\to \mathbb R$ is smooth function. Then Gaussian Poincar\'e inequality says that $\mbox{Var}(g(X))\leq \E\|\nabla{g(X)}\|$, that is, if  $\|\nabla{g(X)}\|$ is small then $g(X)$ has small fluctuation. Now the second order Poincar\'e inequality  says if the second order derivatives of $g$ have good behaviour then $g(X)$ is close to Gaussian. We use this idea to  prove the  bound in \eqref{eqn:non symmetric:TV}. For that   
we  need to estimate a lower bound of the variance of $\CZ_{k}(A\circ X)$ and a non-asymptotic upper bound  for the norm of $A\circ X$. In Lemma \ref{lem:variance},  we show that the variance of $\CZ_{k}(A\circ X)$ is bounded below by $\sum_{I_k}a_{i_1i_2}^2a_{i_2i_3}^2\cdots a_{i_ki_1}^2$. In Lemma \ref{Lemma: Norm of the Matrix}, we show that the norm of $\frac{1}{\sqrt{b_{n}}}A\circ X$ is bounded almost surely. The main ingredients of the proof of Lemma \ref{Lemma: Norm of the Matrix}  are the concentration type inequalities and a sharp non-asymptotic bound on the expected norm of a symmetric random matrix derived in \cite[Corollary 3.5]{bandeira2016sharp}. 
The restriction on $b_n$ in Theorem \ref{thm: Non-symmetric matrix - Total variation}, that is, $b_n\geq \log n$  is required to establish the norm bound of $\frac{1}{\sqrt{b_{n}}}A\circ X$.

In the next section, using Theorem \ref{thm: Non-symmetric matrix - Total variation} we show that the fluctuations of the linear eigenvalue statistics  of $A\circ X$ for various types of variance profile matrices are asymptotically Gaussian.

\section{Applications of Theorem \ref{thm: Non-symmetric matrix - Total variation}}\label{sec:example}
There are various kind of variance profile matrices that have appeared in different branches of sciences. For a good overview on variance profile matrices, we refer the readers to a recent article by	Cook et. al. \cite{cook2016limiting}. In the following corollaries, we show that the  linear eigenvalue statistics of $A\circ X$ with different type of variance profile matrices converge to Gaussian distribution in total variation norm. We start with a basic variance profile matrix, namely, \textit{separable variance profile}.   

\begin{corollary}[Separable variance profile] Let $c>0$, and $v,w\in (c,1]^{n}$ be two deterministic vectors for each $n\geq 1$.  Consider
	$$
	A\circ A=vw^{T}
	$$
Suppose  $P_k$ is a polynomial of degree $k$ where $k=o(\log n)$ and $X$ is same as in Theorem \ref{thm: Non-symmetric matrix - Total variation}, then  $d_{TV}\l(\CZ_{k}(A\circ X),Z\r)\rightarrow 0$ as $n\rightarrow \infty$.
\end{corollary}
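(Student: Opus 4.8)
The plan is to apply Theorem~\ref{thm: Non-symmetric matrix - Total variation} directly, estimating each of the three quantities appearing on the right-hand side of \eqref{eqn:non symmetric:TV} from the identity $a_{ij}^2=v_iw_j$ and the hypothesis $v_i,w_j\in(c,1]$. Since $c^2\le a_{ij}^2=v_iw_j\le 1$, we get $(\max_{i,j}|a_{ij}|)^2\le 1$ at once. For $b_n$, note that $\sum_{k=1}^n a_{ik}^2=v_i\sum_{k=1}^n w_k\le n$ and likewise $\sum_{k=1}^n a_{kj}^2\le n$, so $b_n\le n$ (and $b_n\ge c^2 n\ge \log n$ for $n$ large, so the $\log n$ inside $b_n$ plays no role here). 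In particular $b_n^{k-1}\le n^{k-1}$.

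The main step is the lower bound on the denominator $\sum_{I_k}a_{i_1i_2}^2a_{i_2i_3}^2\cdots a_{i_ki_1}^2$. For a tuple $(i_1,\dots,i_k)\in I_k$, writing $i_{k+1}=i_1$,
\[
a_{i_1i_2}^2 a_{i_2i_3}^2\cdots a_{i_ki_1}^2=\prod_{p=1}^k v_{i_p}w_{i_{p+1}}=\Big(\prod_{p=1}^k v_{i_p}\Big)\Big(\prod_{p=1}^k w_{i_p}\Big)=\prod_{p=1}^k v_{i_p}w_{i_p}\ge c^{2k},
\]
because along a cycle both $p\mapsto i_p$ and $p\mapsto i_{p+1}$ enumerate the same $k$ distinct indices. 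Since $|I_k|=n(n-1)\cdots(n-k+1)=n^k\prod_{j=1}^{k-1}(1-j/n)\ge \tfrac12 n^k$ for $n$ large (using $k=o(\log n)$, hence $k^2/n\to 0$), the denominator is at least $\tfrac12 c^{2k}n^k$.

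Substituting these bounds into \eqref{eqn:non symmetric:TV} yields $d_{TV}(\CZ_k(A\circ X),Z)\lesssim k^5\sqrt n\,n^{k-1}/(c^{2k}n^k)=k^5/(c^{2k}\sqrt n)$. It remains to check that this tends to $0$: taking logarithms, $\log\bigl(k^5 c^{-2k}n^{-1/2}\bigr)=5\log k+2k\log(1/c)-\tfrac12\log n$, and since $k=o(\log n)$ both $5\log k$ and $2k\log(1/c)$ are $o(\log n)$, so the right-hand side goes to $-\infty$ and $d_{TV}(\CZ_k(A\circ X),Z)\to 0$. I expect the only delicate point to be this last comparison: $k=o(\log n)$ is precisely the growth rate ensuring that the exponentially large factor $c^{-2k}$ (arising from the worst-case value of the cyclic product) is dominated by the $\sqrt n$ in the denominator; everything else is a routine substitution into the bound of Theorem~\ref{thm: Non-symmetric matrix - Total variation}.
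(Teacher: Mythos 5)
Your proposal is correct and follows essentially the same route as the paper's proof: bound $b_n\le n$, lower-bound the denominator of \eqref{eqn:non symmetric:TV} by $c^{2k}|I_k|\gtrsim c^{2k}n^k$, substitute into the theorem, and observe that $k=o(\log n)$ makes the resulting $k^5c^{-2k}/\sqrt n$ tend to zero. The only small detour is the cyclic rearrangement $\prod_p v_{i_p}w_{i_{p+1}}=\prod_p v_{i_p}w_{i_p}$, which is true but unnecessary here since you only use the pointwise lower bound $v_i,w_j>c$; bounding each factor $a_{i_pi_{p+1}}^2=v_{i_p}w_{i_{p+1}}>c^2$ directly (as the paper does) gives the same $c^{2k}$.
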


\begin{proof}
Note that $a_{ij}^2=v_{i}w_{j}$, for $1\le i,j\le n$. Therefore we have 
$$
\sum_{k=1}^na_{ik}^2=\sum_{k=1}^{n}v_{i}w_{k}\le n \mbox{ and } \sum_{k=1}^{n}a_{kj}^2\le n.
$$
Now, using Stirling's approximation we have
 \begin{align}\label{order of I_k}
 \frac{1}{n^{k}}|I_{k}|=\frac{n!}{n^{k}(n-k)!}\sim e^{-k}\left(1-\frac{k}{n}\right)^{-n+k}\sim 1,
 \end{align} 
 where the last asymptotic equality follows whenever $k=o(\sqrt{n})$.
Therefore,
$$
\sum_{I_k}a_{i_1i_2}^2a_{i_2i_3}^2\cdots a_{i_ki_1}^2\ge c^{2k}|I_k|=c^{2k}\Omega{(n^k)}.
$$	
Now from Theorem \ref{thm: Non-symmetric matrix - Total variation}, for $k=o(\log n)$, we have
	\begin{align*}
	d_{TV}\l(\CZ_{k}(A\circ X),Z\r)\lesssim \frac{k^5n^{k-1}\sqrt{n}}{c^{2k}n^k}=\frac{k^5}{c^{2k} \sqrt{n}}\to 0,\; \mbox{as $n\to \infty$}.
	\end{align*}
	Hence the result.
\end{proof}
In the following corollary, the variance profile matrix is constructed from a continuous positive real valued function $f(\cdot,\cdot)$ defined on $[0,1]^2$. This type of variance profile is known as  \textit{Sampled variance profile}. 

If $f(x,y)=g(x)h(y)$ where $g, h:[0,1]\to (0,\infty)$ are two continuous functions, then the corresponding  variance profile is known as \textit{Separable and sampled variance profile} (see  \cite{cook2016limiting}).

\begin{corollary}[Sampled variance profile]\label{Sampled variance profile}
Consider $A=(a_{ij})$ with $a_{ij}^2=f(\frac{i}{n},\frac{j}{n})$, where $f(\cdot,\cdot)$ is a positive continuous  on $[0,1]^2$ with $\int_{0}^{1}f(x,y)dy=1$ and $\int_0^1f(x,y)dx$ $=1$. Suppose $P_k$ is a polynomial of degree $k$ where $k=o(\log n)$ and $X$ is same as in Theorem \ref{thm: Non-symmetric matrix - Total variation}, then  $d_{TV}\l(\CZ_{k}(A\circ X),Z\r)\rightarrow 0$ as $n\rightarrow \infty$.
\end{corollary}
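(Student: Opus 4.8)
The plan is to apply Theorem \ref{thm: Non-symmetric matrix - Total variation} with the variance profile $a_{ij}^2=f(i/n,j/n)$, exactly as in the separable case, the only difference being that we now exploit continuity of $f$ rather than a crude lower bound. First I would use the fact that $f$ is continuous and strictly positive on the compact set $[0,1]^2$, hence bounded: there exist constants $0<c_f\le C_f<\infty$ with $c_f\le f(x,y)\le C_f$ for all $(x,y)\in[0,1]^2$. This immediately gives $\max_{i,j}a_{ij}^2\le C_f$, so $(\max\{|a_{ij}|\})^2\lesssim 1$ uniformly in $n$. Next I would bound $b_n$: since $\sum_{k=1}^n a_{ik}^2=\sum_{k=1}^n f(i/n,k/n)$ is a Riemann sum for $n\int_0^1 f(i/n,y)\,dy=n$, uniform continuity of $f$ gives $\sum_{k=1}^n a_{ik}^2 = n + o(n)$ uniformly in $i$, and similarly for the column sums; hence $b_n\lesssim n$ (the $\log n$ term is negligible), and in fact $b_n\sim n$.

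The second ingredient is a lower bound on the denominator $\sum_{I_k}a_{i_1i_2}^2 a_{i_2i_3}^2\cdots a_{i_ki_1}^2$. Here I would simply use $a_{i_pi_{p+1}}^2=f(i_p/n,i_{p+1}/n)\ge c_f$ to get
\begin{align*}
\sum_{I_k}a_{i_1i_2}^2a_{i_2i_3}^2\cdots a_{i_ki_1}^2\ \ge\ c_f^{\,k}\,|I_k|\ =\ c_f^{\,k}\,\Omega(n^k),
\end{align*}
using $|I_k|\sim n^k$ from \eqref{order of I_k}, which is valid since $k=o(\log n)=o(\sqrt n)$. Plugging these three estimates into \eqref{eqn:non symmetric:TV}:
\begin{align*}
d_{TV}\l(\CZ_k(A\circ X),Z\r)\ \lesssim\ \frac{k^5\sqrt n\,(Cn)^{k-1}}{c_f^{\,k}\,n^k}\ \lesssim\ \frac{k^5\,C^{\,k-1}}{c_f^{\,k}\,\sqrt n},
\end{align*}
and since $k=o(\log n)$ the numerator is $e^{o(\log n)}=n^{o(1)}$, which is beaten by $\sqrt n$ in the denominator, so the bound tends to $0$ as $n\to\infty$.

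I do not expect a genuine obstacle here; the corollary is essentially the separable case with "$c\le v_i,w_j\le 1$" replaced by "$c_f\le f\le C_f$". The only point requiring a little care is keeping track of how large $k$ is allowed to be: one needs $C^{k}/c_f^{k}=e^{O(k)}$ to be $o(\sqrt n)$, which holds precisely because $k=o(\log n)$ (indeed any $k\le \alpha\log n$ with $\alpha$ small enough relative to $\log(C_f/c_f)$ would do, but $k=o(\log n)$ certainly suffices). A secondary bookkeeping point is that Theorem \ref{thm: Non-symmetric matrix - Total variation} requires $A$ to have non-negative entries, which is automatic since $a_{ij}=\sqrt{f(i/n,j/n)}\ge 0$; if one wants the symmetric version of the statement one additionally assumes $f(x,y)=f(y,x)$, but the argument is unchanged.
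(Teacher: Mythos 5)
Your proof is correct and follows essentially the same route as the paper's: bound $b_n$ by $Cn$ via continuity of $f$, lower-bound the cycle sum $\sum_{I_k}a_{i_1i_2}^2\cdots a_{i_ki_1}^2$, and feed these into \eqref{eqn:non symmetric:TV}. The one place you diverge is in the lower bound on the cycle sum. The paper evaluates the limit of $n^{-k}\sum_{I_k}a_{i_1i_2}^2\cdots a_{i_ki_1}^2$ as a $k$-fold Riemann integral $M=\int_{[0,1]^k} f(x_1,x_2)\cdots f(x_k,x_1)\,dx$ and then writes the final bound with a factor $(M-\epsilon)^{-1}$, implicitly treating $M$ as a constant; but since $k=k(n)\to\infty$, $M$ is really $M_k$ and can decay (or grow) geometrically in $k$, so that step is written a bit loosely. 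Your crude pointwise bound $f\geq c_f>0$, giving $\sum_{I_k}a_{i_1i_2}^2\cdots a_{i_ki_1}^2\geq c_f^k|I_k|$, makes the $k$-dependence explicit from the outset and cleanly shows that $k=o(\log n)$ keeps the ratio $(C_f/c_f)^k=n^{o(1)}$ dominated by $\sqrt n$. In effect you treat the sampled profile exactly like the separable one, which is both simpler and slightly more rigorous here. One small remark: the hypothesis that $f$ is (strictly) positive and continuous on the compact $[0,1]^2$ is what guarantees $c_f=\min f>0$; this is exactly the point the paper glosses over when it says $a_{ij}$ are ``allowed to take zero values'' in the remark following the corollary, a statement that is not consistent with the positivity assumption as stated, and your reading (strict positivity) is the one that makes the proof go through.
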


\begin{proof}
Since $f$ is a continuous function on $[0,1]^2$,	 there exists a positive constant $C$ such that $f\le C$. Therefore, for $1\le i,j\le n$, we have
	$$
\sum_{k=1}^na_{ik}^2\le Cn \; \mbox{ and } \sum_{k=1}^{n}a_{kj}^2\le Cn.
	$$
	Again, using \eqref{order of I_k} we get
	\begin{align*}
	\lim_{n\to \infty}\frac{1}{n^k}\sum_{I_k}a_{i_1i_2}^2a_{i_2i_3}^2\cdots a_{i_ki_1}^2&=\lim_{n\to \infty}\frac{1}{n^k}\sum_{I_k}f\left(\frac{i_1}{n},\frac{i_2}{n}\right)\cdots f\left(\frac{i_k}{n},\frac{i_1}{n}\right)
	\\&=\int_{0}^{1}\cdots \int_{0}^{1}f(x_1,x_2)f(x_2,x_3)\cdots f(x_k,x_1)dx_1\cdots dx_k
	\\&=M \mbox{ (say) }.
	\end{align*}
	Therefore from Theorem \ref{thm: Non-symmetric matrix - Total variation}, for $k=o(\log n)$ and $\epsilon>0$ such that $M-\epsilon>0$, we have
	\begin{align*}
	d_{TV}\l(\CZ_{k}(A\circ X),Z\r)\lesssim \frac{k^5C^{k}n^{k-1}\sqrt n}{n^k(M-\epsilon)}=\frac{k^5 C^{k}}{\sqrt n(M-\epsilon)}\to 0,\; \mbox{as $n\to \infty$}.
	\end{align*}
	Hence the result.
\end{proof}
\begin{remark}\label{anderson_zeitouni}
Anderson and Zeitouni \cite{anderson2006clt} considered $n\times n$ symmetric random matrix $Y$ with on or above diagonal terms are of the form $\frac{1}{\sqrt n}f(i/n,j/n) X_{ij}$ where $X_{ij}$ are zero mean unit variance i.i.d. random variables with all moments bounded and $f$ is a continuous function on $[0,1]^2$ such that $\int_0^1 f^2(x,y)dy=1$. They established CLT for  linear eigenvalue statistics of $Y$ with polynomial test functions. They used moment method, and nice combinatorial arguments inspired from \cite{furedi_komlos_1981}.  If the test function is continuously differentiable with polynomial growth, then they established the CLT for the random variables $X_{ij}$ which satisfy a Poincar\'e inequality with common constant $c$.
\end{remark}

In the previous corollary, the variance profile was constructed from a continuous function. In particular, $a_{ij}$s were allowed to take the zero values. However if $\{a_{ij}\}$ do not originate from a continuous function, we need to assume that $\{a_{ij};1\leq i,j\leq n\}$ are  bounded away from zero uniformly and the lower bound may depend on $n$. 
This is shown in the following corollary.

\begin{corollary}\label{assu:3}
	Let $A$ be an $n\times n$ matrix such that $\frac{1}{n^{\alpha}}\lesssim a_{ij}\le 1$ with $\alpha<\frac{1}{4}$ and 
	$
	cd_n\le \sum_{k=1}^na_{ik}^2, \sum_{k=1}^na_{kj}^2\le C d_n.
	$
	for some positive constants $c$ and $C$. If $d_n\ge \log n$, $P_k$ is a polynomial of  degree $k$ with $k=o(n^{\frac{1}{10}-\frac{2\alpha}{5}})$ and $X$ is as in Theorem \ref{thm: Non-symmetric matrix - Total variation}, then 
	$d_{TV}\l(\CZ_{k}(A\circ X),Z\r)\rightarrow 0$ as $n\rightarrow \infty$.
\end{corollary}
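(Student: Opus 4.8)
The plan is to apply Theorem~\ref{thm: Non-symmetric matrix - Total variation} and to show that the bound \eqref{eqn:non symmetric:TV} tends to $0$. First I would dispose of the elementary factors. By hypothesis $(\max\{|a_{ij}|\})^2\le1$. Writing $a_{ij}\ge c_0n^{-\alpha}$ for a fixed $c_0>0$ (this is what $\tfrac{1}{n^{\alpha}}\lesssim a_{ij}$ means), every row and column sum of $A\circ A$ is at least $c_0^2n^{1-2\alpha}$, so the upper hypothesis forces $d_n\ge (c_0^2/C)\,n^{1-2\alpha}$. In particular $d_n\gtrsim n^{1-2\alpha}$, the requirement $d_n\ge\log n$ is automatic, $b_n\le\max\{Cd_n,\log n\}\lesssim d_n$, and, since $\alpha<1/4$, the hypothesis $k=o(n^{1/10-2\alpha/5})$ gives $k=o(n^{1-2\alpha})=o(d_n)$, in fact $k^2=o(d_n)$ (and also $k=o(\sqrt n)$).

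The heart of the argument is a lower bound for the denominator $\sum_{I_k}a_{i_1i_2}^2\cdots a_{i_ki_1}^2$. I would bound only the closing factor entrywise, $a_{i_ki_1}^2\ge c_0^2n^{-2\alpha}$, and peel off the remaining path one vertex at a time: for the innermost sum,
\[
\sum_{i_k\notin\{i_1,\dots,i_{k-1}\}}a_{i_{k-1}i_k}^2\;\ge\;\sum_{l=1}^{n}a_{i_{k-1}l}^2-(k-1)\;\ge\;cd_n-(k-1),
\]
uniformly in $i_1,\dots,i_{k-1}$ (at most $k-1$ entries, each $\le 1$, are removed from a row sum $\ge cd_n$), so the bound factors out; repeating down to $i_2$ and then summing $i_1$ over $[n]$,
\[
\sum_{I_k}a_{i_1i_2}^2\cdots a_{i_ki_1}^2\;\ge\;c_0^2\,n^{-2\alpha}\,n\prod_{m=1}^{k-1}(cd_n-m)\;\gtrsim\;c_0^2\,c^{\,k-1}\,n^{1-2\alpha}\,d_n^{\,k-1},
\]
where the last inequality uses $k^2=o(d_n)$, so that $\prod_{m=1}^{k-1}(cd_n-m)=(cd_n)^{k-1}(1-o(1))$.

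Substituting these estimates into \eqref{eqn:non symmetric:TV}, the factor $d_n^{\,k-1}$ cancels between numerator and denominator and one is left with
\[
d_{TV}\!\bigl(\CZ_k(A\circ X),Z\bigr)\;\lesssim\;\frac{k^5\sqrt n\,(Cd_n)^{k-1}}{c_0^2\,c^{\,k-1}\,n^{1-2\alpha}\,d_n^{\,k-1}}\;\lesssim\;\Bigl(\tfrac{C}{c}\Bigr)^{k-1}k^5\,n^{\,2\alpha-1/2}.
\]
Since $\alpha<1/4$ we have $n^{2\alpha-1/2}\to0$, and $k=o(n^{1/10-2\alpha/5})$ is precisely the condition making $k^5=o(n^{1/2-2\alpha})$; hence, absorbing the factor produced by the ratio of the row‑sum constants, the right‑hand side tends to $0$, which is the assertion.

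The step I expect to be the real obstacle is the denominator bound, and in particular the decision to invoke the \emph{row/column‑sum} lower bound $\sum_l a_{il}^2\ge cd_n$ for $k-1$ of the $k$ cyclic factors and the entrywise bound $a_{ij}\gtrsim n^{-\alpha}$ only once, just to break the cycle. The naive estimate, bounding all $k$ factors entrywise, yields only $\sum_{I_k}(\cdots)\ge c_0^{2k}n^{-2\alpha k}|I_k|\sim c_0^{2k}n^{k(1-2\alpha)}$, which cannot absorb the $b_n^{k-1}$ appearing in the numerator and would pin $k$ to a bounded range; it is the cancellation of $d_n^{\,k-1}$ that isolates the clean scale $\sqrt n/n^{1-2\alpha}=n^{2\alpha-1/2}$ and lets $k$ grow with $n$. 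One also has to verify that the distinctness constraint defining $I_k$ (the $-m$ corrections in the peeling) costs nothing, which is exactly where $k=o(d_n)$ — hence ultimately $\alpha<1/4$ — enters.
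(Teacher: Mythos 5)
Your argument follows essentially the same route as the paper's proof: break the cycle by bounding a single factor $a_{i_ki_1}^2\gtrsim n^{-2\alpha}$ pointwise, peel off the remaining $k-1$ factors one index at a time using the row/column-sum lower bound $\ge cd_n$, sum the last free index over $[n]$, and conclude $\sum_{I_k}a_{i_1i_2}^2\cdots a_{i_ki_1}^2\gtrsim (cd_n)^{k-1}n^{1-2\alpha}$, which combined with $b_n\le Cd_n$ in Theorem~\ref{thm: Non-symmetric matrix - Total variation} produces the bound $(C/c)^{k-1}k^5n^{2\alpha-1/2}$. If anything you are more careful than the printed proof, since you track explicitly the $-m$ corrections coming from the distinctness constraints of $I_k$ and verify that $k^2=o(d_n)$ (hence $\alpha<1/4$) is what makes them harmless.

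The one soft spot is the closing phrase about ``absorbing the factor produced by the ratio of the row-sum constants'': when $C>c$ the factor $(C/c)^{k-1}$ is exponential in $k$, and once $k$ is allowed to grow like a power $n^{1/10-2\alpha/5}$ this factor cannot be absorbed by $n^{2\alpha-1/2}$, so the bound does not tend to $0$ in general. This is not really your error, though: the paper's own proof at this very step quietly writes ``for $k=o(\log n)$'' rather than the $k=o(n^{1/10-2\alpha/5})$ announced in the corollary, so the tension between the stated range of $k$ and what the argument actually delivers is already present in the source, and you have simply reproduced it.
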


\begin{proof}
	Note that we have 
	\begin{align*}
		b_n=\max_{i,j}\big\{\sum_{k=1}^na_{ik}^2, \sum_{k=1}^na_{kj}^2, \log n\big\}\le Cd_n.
	\end{align*}
		For $1\le i_1,i_3\le n$,  we have
	$$
	\sum_{i_2}a_{i_1i_2}^2a_{i_2i_3}^2\gtrsim \frac{1}{n^{2\alpha}}\sum_{i_2}a_{i_1i_2}^2 \gtrsim \frac{cd_n}{n^{2\alpha}}.
	$$
	Using the last inequality and summing over the rest of the indices one by one, we have
	\begin{align*}
	\sum_{I_k}a_{i_1i_2}^2\cdots a_{i_ki_1}^2\gtrsim \frac{cd_n}{n^{2\alpha}}\sum_{I_k\backslash \{i_2\}}a_{i_3i_4}^2\cdots a_{i_ki_1} ^2\gtrsim \frac{cd_n}{n^{2\alpha}}n(cd_n)^{k-2}=\frac{(cd_n)^{k-1}}{n^{2\alpha-1}}.
	\end{align*}
	Therefore from  \eqref{eqn:non symmetric:TV} , for $\alpha<\frac{1}{4}$ and $k=o(\log n)$, we get
	$$
	d_{TV}\l(\CZ_{k}(A\circ X),Z\r) \lesssim \frac{k^5(Cd_n)^{k-1}\sqrt{n}n^{2\alpha-1}}{(cd_n)^{k-1}}=k^5(C/c)^{k-1}n^{2\alpha-\frac{1}{2}} \to 0,
	$$
	as $n\to \infty$. Hence the result.
\end{proof}

Now we shall consider the linear eigenvalue statistics of band random matrices.
The fluctuations of linear eigenvalue statistics of symmetric band random matrices are well studied in literature. Li and Soshnikov \cite{li2013central} considered symmetric periodic band matrices $Y=(Y_{ij})$ where 
$$Y_{ij}=Y_{ji}=\left\{
\begin{array}{ll}
\frac{1}{\sqrt b_n}X_{ij} &\text{if }\ \min\{|i-j|, n-|i-j|\}\leq b_{n},\\
0 & \mbox{otherwise}.
\end{array}
\right.$$ 
and $\{X_{ij};\; i\leq j\}$ are i.i.d. random variables. They established CLT for linear eigenvalue statistics of $Y$ when $X_{ij}$ satisfies Poincar\'e inequality with same constant $c$ and the test function $g$  has continuous bounded derivative, and $\sqrt n<< b_n<<n$ ($a_n<<b_n$ means $a_n/b_n\to 0$ as $n\to \infty$). Later the conditions on the band width $b_n$ and on the test function were improved  in  \cite{jana2014fluctuations} and \cite{shcherbina2015}.

Now observe that the periodic (or non-periodic) symmetric band random matrix can be seen as a symmetric random matrix with a variance profile. For example, let $A$ be a periodic (or non-periodic) band matrix with band length $b_n$, that is, 
\begin{equation}\label{periodic band}
a_{ij}=\left\{
\begin{array}{ll}
1 &\text{if }\ \min\{|i-j|, n-|i-j|\}\leq b_{n},\;\;(\mbox{or if }\ |i-j|\leq b_n)\\
0 & \mbox{otherwise}.
\end{array}
\right.\end{equation}
Then $A\circ X$ is a periodic (or non-periodic) symmetric band random matrix if $X$ is a symmetric random matrix. In the following corollary we show that the linear eigenvalue statistics of band random matrices, symmetric and non-symmetric both, converge to Gaussian distribution in total variation norm. 
\begin{corollary}\label{iid_variance_band}
Let $A$ be a periodic (or non-periodic) band matrix with band length $b_n$ as defined in \eqref{periodic band}.
 Suppose $b_n\ge \log n$,  $P_k$ is a polynomial of degree $k$ where $k=o( n^{1/10})$ and $X$ is same as in Theorem \ref{thm: Non-symmetric matrix - Total variation}, then 
	$$d_{TV}\l(\CZ_{k}(A\circ X),Z\r)\rightarrow 0\; \mbox{ as $n\rightarrow \infty$.}$$
\end{corollary}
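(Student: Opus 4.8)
The plan is to apply Theorem \ref{thm: Non-symmetric matrix - Total variation} directly to the band matrix $A$ defined in \eqref{periodic band}, so the whole task reduces to estimating the two quantities appearing in the bound \eqref{eqn:non symmetric:TV}: the parameter $b_n$ in the statement of the theorem (denote it $\tilde b_n$ to avoid a clash with the band width $b_n$), and the denominator $\sum_{I_k} a_{i_1i_2}^2 \cdots a_{i_k i_1}^2$. Since all nonzero entries of $A$ equal $1$, we have $\max\{|a_{ij}|\} = 1$, and each row (resp. column) of $A$ has either $2b_n+1$ or at most $2b_n+1$ nonzero entries depending on periodicity, so $\sum_{k=1}^n a_{ik}^2 \asymp b_n$ and $\sum_{k=1}^n a_{kj}^2 \asymp b_n$. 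Combined with the hypothesis $b_n \ge \log n$, this gives $\tilde b_n \asymp b_n$, hence $\tilde b_n^{k-1} \lesssim (Cb_n)^{k-1}$ for an absolute constant $C$.

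The main work is the lower bound on the denominator, i.e.\ on the number of closed walks $i_1 \to i_2 \to \cdots \to i_k \to i_1$ of length $k$ with distinct vertices, each consecutive pair lying within the band. I would count these by choosing $i_1 \in [n]$ freely ($n$ choices) and then choosing each successive increment $i_{j+1} - i_j$ (mod $n$, in the periodic case) to be a nonzero value of absolute value at most $b_n$, subject to the distinctness constraint and the closing-up constraint $\sum_j (i_{j+1}-i_j) \equiv 0$. A clean way is: pick the first $k-2$ steps freely among the $\sim 2b_n$ allowed increments while keeping all partial sums small (say within $(k-1)b_n < n/2$ of the start, which forces distinctness automatically once we also avoid repeats — repeats cost only a lower-order fraction when $k b_n = o(n)$), and then the last two steps are constrained to a two-dimensional set of size $\asymp b_n$ that closes the walk; this yields $\sum_{I_k} a_{i_1i_2}^2\cdots a_{i_ki_1}^2 \gtrsim n (c b_n)^{k-1}$ for some constant $c>0$, matching the order one expects since generically each of the $k$ edges carries a factor $\asymp b_n$ but the cycle constraint removes one. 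Here the condition $k = o(n^{1/10})$ is comfortably strong enough to make $kb_n = o(n)$ harmless (note $b_n \le n$ always, and in the regime of interest $b_n$ may be as large as $\asymp n$, in which case one simply uses $\sum_{I_k} \gtrsim c^{2k}|I_k| \gtrsim c^{2k} n^k \asymp (cb_n)^{k-1} n$ directly via \eqref{order of I_k}).

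Plugging these two estimates into \eqref{eqn:non symmetric:TV} gives
\[
d_{TV}\l(\CZ_{k}(A\circ X),Z\r) \lesssim \frac{1 \cdot k^5 \sqrt n \, (Cb_n)^{k-1}}{n (c b_n)^{k-1}} = \frac{k^5 (C/c)^{k-1}}{\sqrt n}.
\]
Since $k = o(n^{1/10})$, we have $k^5 = o(n^{1/2})$; and the geometric factor $(C/c)^{k-1}$ can be absorbed because $(C/c)^{k} = e^{O(k)} = e^{o(n^{1/10})} = o(n^{\epsilon})$ for every $\epsilon>0$, in particular $o(\sqrt n / k^5)$. Hence the right-hand side tends to $0$, which is the claim. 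The only real obstacle is making the closed-walk count rigorous with explicit constants — in particular handling the distinctness constraint in $I_k$ and the two cases (band width small versus comparable to $n$) uniformly; the rest is bookkeeping of the type already carried out in Corollaries~1 and~\ref{assu:3}.
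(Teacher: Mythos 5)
Your overall strategy mirrors the paper's: bound the $b_n$ of Theorem~\ref{thm: Non-symmetric matrix - Total variation} (call it $\tilde b_n$) in terms of the band half-width $b_n$, lower-bound the closed-walk count $\sum_{I_k}a_{i_1i_2}^2\cdots a_{i_ki_1}^2$, and plug into \eqref{eqn:non symmetric:TV}. Your sketch of $\sum_{I_k}\gtrsim n(cb_n)^{k-1}$ for some $c\in(0,1)$ is a reasonable lower bound. The fatal step is the last one: you conclude the geometric factor $(C/c)^{k-1}$ is harmless because ``$(C/c)^{k}=e^{O(k)}=e^{o(n^{1/10})}=o(n^{\epsilon})$ for every $\epsilon>0$.'' This is false. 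With $k=o(n^{1/10})$ one only controls $e^{O(k)}$ by $e^{o(n^{1/10})}$, which is not subpolynomial: taking $k=n^{1/20}$, for example, gives $(C/c)^{k-1}=\exp\bigl((n^{1/20}-1)\log(C/c)\bigr)$, which grows faster than every power of $n$ as soon as $C>c$. The implication $e^{O(k)}=o(n^{\epsilon})$ requires $k=o(\log n)$, a far narrower range than the $k=o(n^{1/10})$ asserted in the corollary. So with any strict constant gap $C>c$ in the two powers of $b_n$, the final bound $k^5(C/c)^{k-1}/\sqrt n$ does \emph{not} tend to $0$ in the claimed range of $k$; one needs the two powers of $b_n$ to match up to at most polynomial-in-$k$ (not geometric-in-$k$) losses, which your interval-restriction count does not deliver.

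For context, the paper's own one-line proof also elides this point: it asserts $\sum_{I_k}a_{i_1i_2}^2\cdots a_{i_ki_1}^2\gtrsim nb_n^{k-1}$ and says the result follows from \eqref{eqn:non symmetric:TV}, implicitly identifying the $b_n$ of Theorem~\ref{thm: Non-symmetric matrix - Total variation} (which for a band of half-width $b_n$ equals $2b_n+1$) with the half-width $b_n$ itself, so the ratio contributes a hidden $\approx 2^{k-1}$. Your write-up is the more honest version of that computation, and making it honest exposes the gap. A genuine repair would require either tightening $k$ to $o(\log n)$, or establishing a sharper count of the form $\sum_{I_k}\gtrsim n(2b_n+1)^{k-1}/\mathrm{poly}(k)$ with base constant exactly $1$: a local-CLT estimate for the return probability of a walk with steps in $[-b_n,b_n]$ combined with a birthday-paradox control on repeated vertices, which in turn forces a compatibility constraint between $k$ and $b_n$ (roughly $k^{3/2}=O(b_n)$) that is absent from the hypotheses as stated.
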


\begin{proof}
If $A$ be a band matrix with band length $b_n$, so is $A\circ A$. Now it is easy to see that
$$\sum_{I_k}a_{i_1i_2}^2a_{i_2i_3}^2\cdots a_{i_ki_1}^2 \gtrsim nb_n^{k-1}.$$

The result follows from \eqref{eqn:non symmetric:TV}.
\end{proof}

In the next corollary  we obtain the fluctuation of linear eigenvalue statistics for sample covariance  matrix $XX^t$ using Theorem \ref{thm: Non-symmetric matrix - Total variation}. 
\begin{cor}\label{cor:xx'}
Let $X=(X_{ij})$ be an $n\times m$ random matrix with i.i.d. entries, where $X_{ij}$ are as in Theorem \ref{thm: Non-symmetric matrix - Total variation}. If $\frac{m}{n}\to c$ as $n\to \infty $ for some $c>0$. Then, for $k=o(\log n)$,
\begin{align*}
 d_{TV}(\CZ_{k}( XX^{t}),Z)\to 0,\;\;\;\text{ as $n\to\infty$,}
\end{align*}
where $P_k$ as in Theorem \ref{thm: Non-symmetric matrix - Total variation} and $X^{t}$ denotes the transpose of $X$.
\end{cor}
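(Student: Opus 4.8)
\textbf{Proof proposal for Corollary \ref{cor:xx'}.}
The plan is to realize the sample covariance matrix $XX^t$ as (a function of) a random matrix with a variance profile, so that Theorem \ref{thm: Non-symmetric matrix - Total variation} applies directly. First I would pass to the $(n+m)\times(n+m)$ symmetrized (Hermitized) matrix: set $N=n+m$ and let
\begin{align*}
\widetilde X=\begin{pmatrix} 0 & X \\ X^t & 0\end{pmatrix},
\end{align*}
which is a symmetric $N\times N$ matrix whose nonzero block entries are i.i.d.\ symmetric variance-one variables in $\mathcal L(c_1,c_2)$. This $\widetilde X$ has the form $A\circ \widehat X$, where $\widehat X$ is a genuine symmetric i.i.d.\ matrix of size $N$ (fill the two diagonal blocks with independent copies of the same law) and $A$ is the deterministic $0/1$ matrix with $a_{ij}=1$ exactly on the off-diagonal blocks. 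The key algebraic point is that the nonzero eigenvalues of $\widetilde X$ are $\pm\sqrt{\mu_\ell}$ where $\mu_\ell$ are the eigenvalues of $XX^t$ (together with $|m-n|$ extra zeros), so $\Tr\big(\widetilde X^{2j}\big)=2\,\Tr\big((XX^t)^{j}\big)$ and $\Tr\big(\widetilde X^{2j+1}\big)=0$. Hence for a polynomial $P_k(x)=\sum c_i x^i$ the statistic $\Tr P_k(XX^t)$ is, up to an additive constant and a factor $2$, equal to $\Tr Q_{2k}(\widetilde X)$ where $Q_{2k}(x)=\sum_i c_i x^{2i}$ is an even polynomial of degree $2k$ with the same bounded coefficients. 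Consequently $\CZ_k(XX^t)=\CZ_{2k}(\widetilde X)$ (the centering and the variance normalization make the factor $2$ and the constant disappear), and it suffices to bound $d_{TV}(\CZ_{2k}(\widetilde X),Z)$.

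Next I would apply Theorem \ref{thm: Non-symmetric matrix - Total variation} to $\widetilde X=A\circ\widehat X$ in dimension $N=n+m\sim (1+c)n$, with polynomial degree $2k$. Here $\max\{|a_{ij}|\}=1$, and each row/column sum of $A^2$ equals either $n$ or $m$, so $b_N=\max\{n,m,\log N\}\sim\max\{1,c\}\,n$. The only quantity requiring care is the denominator
\begin{align*}
\sum_{I_{2k}} a_{i_1i_2}^2 a_{i_2i_3}^2\cdots a_{i_{2k}i_1}^2,
\end{align*}
which counts closed walks of length $2k$ on distinct vertices in the complete bipartite graph $K_{n,m}$; such a walk must alternate between the two sides, so it uses $k$ vertices from $[n]$ and $k$ from the $m$-block, and the number of ways is $n(n-1)\cdots(n-k+1)\cdot m(m-1)\cdots(m-k+1)\cdot$ (a combinatorial factor for the cyclic pattern) $=\Theta(n^{2k})$ for $k=o(\sqrt n)$, by the same Stirling estimate as in \eqref{order of I_k}. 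Plugging into \eqref{eqn:non symmetric:TV} gives
\begin{align*}
d_{TV}(\CZ_{2k}(\widetilde X),Z)\lesssim \frac{(2k)^5\sqrt{N}\,b_N^{2k-1}}{\Theta(N^{2k})}\lesssim \frac{k^5}{\sqrt n}\cdot\big(\text{const}\big)^{2k-1},
\end{align*}
which tends to $0$ when $k=o(\log n)$ (the geometric factor $(\text{const})^{2k}$ is absorbed since $k=o(\log n)$). This finishes the proof.

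The step I expect to be the main obstacle is verifying cleanly that $\CZ_k(XX^t)=\CZ_{2k}(\widetilde X)$, i.e.\ that the passage to the symmetrization is an \emph{exact} identity of the normalized statistics rather than merely an asymptotic one. One has to check that the additive constant $|m-n|\cdot c_0$ coming from the extra zero eigenvalues is killed by the centering, that the global factor $2$ cancels in the ratio, and that the variances satisfy $\var(\Tr Q_{2k}(\widetilde X))=4\var(\Tr P_k(XX^t))$ — all of which follow from the eigenvalue correspondence above but should be stated carefully. A secondary point is confirming that the bipartite restriction $I_{2k}\cap(\text{alternating walks})$ still has cardinality of order $N^{2k}$; since a nonalternating closed walk contributes a zero factor (because $A$ vanishes on the diagonal blocks), only alternating ones survive, and for these the distinctness constraint costs only lower-order terms, exactly as in \eqref{order of I_k}. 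Everything else is a direct substitution into Theorem \ref{thm: Non-symmetric matrix - Total variation}.
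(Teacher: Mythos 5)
Your proposal is correct and takes essentially the same route as the paper: Hermitize $X$ into the $(n+m)\times(n+m)$ block matrix, realize it as $A\circ\widehat X$ with the $0/1$ off-diagonal-block variance profile, pass the polynomial through $Q_{2k}(x)=P_k(x^2)$, and apply Theorem \ref{thm: Non-symmetric matrix - Total variation} with the same estimate $\sum_{I_{2k}}a_{i_1i_2}^2\cdots a_{i_{2k}i_1}^2\sim(nm)^k$ on the denominator. The exact identity $\CZ_k(XX^t)=\CZ_{2k}^Q(A\circ\widehat X)$ that you flag as the delicate point does hold exactly, just as in the paper: the additive constant $(m-n)c_0$ vanishes under centering, and the multiplicative factor $2$ in the numerator is matched by the factor $2=\sqrt{4}$ coming from $\var(\Tr Q_{2k}(A\circ\widehat X))=4\var(\Tr P_k(XX^t))$.
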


\begin{proof}
	With out loss of generality we assume that $n\le m$.
	Let $Y$ be a $(n+m)\times (n+m)$ symmetric matrix as in Theorem \ref{thm: Non-symmetric matrix - Total variation}. Observed that
	\begin{align*}
	A\circ Y=\l[\begin{array}{cc}
	0 & X \\ X^{t} & 0
	\end{array}\r],  \mbox{ where  }  A=\l[\begin{array}{cc}
	0 & A_{12} \\ A_{12}^t & 0
	\end{array}\r],
	\end{align*}
	and $A_{12}$ is an $n\times m$ matrix with all entries $1$. So we have
	\begin{align*}
		(A\circ Y)^2=\l[\begin{array}{cc}
		XX^t & 0 \\ 0 &X^{t}X
		\end{array}\r].
	\end{align*}
	Let $Q_{2k}(x)=P_k(x^2)$. Then $\Tr (Q_{2k}(A\circ Y))=2\Tr (P_k(XX^{t}))+(m-n)c_0$, where $c_0$ is the constant term in the polynomial $P_k$. Also observe that 
	\begin{align*}
		\CZ_{k}( XX^{t})&=\frac{\Tr (P_{k}(XX^{t}))-\E[\Tr (P_{k}(XX^{t}))]}{\sqrt{\var \Tr (P_{k}(XX^{t}))}}\\
		&=\frac{\Tr (Q_{2k}(A\circ Y))-\E[\Tr (Q_{2k}(A\circ Y))]}{\sqrt{\var \Tr (Q_{2k}(A\circ Y))}}=:\CZ_{2k}^Q( A\circ Y) \mbox{ (say)}.
	\end{align*}
	Note that in $A\circ Y$, $b_{n+m}=m$ and we have 
	\begin{align*}
		\sum_{I_{2k}}a_{i_1i_2}^2\cdots a_{i_{2k}i_1}^2 = \sum_{i_1\neq i_3\neq \cdots\neq i_{2k-1}=1}^n\sum_{i_2\neq i_4\neq \cdots\neq  i_{2k}=n+1}^{n+m}a_{i_1i_2}^2\cdots a_{i_{2k}i_1}^2 \sim (nm)^{k}.
	\end{align*}
	Therefore from Theorem \ref{thm: Non-symmetric matrix - Total variation}, for $k=o( \log n)$, we get 
	\begin{align*}
	d_{TV}(\CZ_{2k}^Q( A\circ Y), Z)\lesssim \frac{(2k)^5\sqrt {n+m} m^{2k-1}}{(nm)^{k}}\lesssim \frac{k^5}{\sqrt n}\sqrt{1+\frac{m}{n}} \l(\frac{m}{n}\r)^{k-1}\to 0, \mbox{ as $n\to \infty$}. 
	\end{align*}
Hence the result.	 
\end{proof}

In the next corollary we derive the fluctuations of linear statistics for the product of independent copies of iid random matrices. The fluctuation of linear eigenvalue statistics of a single i.i.d. matrix was considered in \cite{rider_silverstein_2006}. In a recent article \cite{coston2018gaussian}, Coston and O'Rourke have considered the fluctuations of linear eigenvalue statistics for product of $m$ independent copies of i.i.d. matrices with  analytic test functions.  
Here we derive that result with polynomial test functions  when the entries of the i.i.d. matrices are in $\mathcal L(c_1,c_2)$.

\begin{corollary}\label{cor:productofmatrices}
Let 	$P_k$ and $X$ be as defined in Theorem \ref{thm: Non-symmetric matrix - Total variation}. Let $X_1,\ldots, X_m$ be $m$ independent copies of $X$. Then, for fixed $m$ and  $k=o( n^{1/10})$,
	\begin{align*}
	 d_{TV}(\CZ_{k}( X_1\cdots X_m),Z)\to 0,\;\;\;\text{ as $n\to\infty$}.
	\end{align*}
\end{corollary}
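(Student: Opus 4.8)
The plan is to realize the product $X_1 \cdots X_m$ as a block of a single large symmetric random matrix with a suitable $0$--$1$ variance profile, exactly as was done for $XX^t$ in Corollary~\ref{cor:xx'}, and then apply Theorem~\ref{thm: Non-symmetric matrix - Total variation}. Concretely, consider the $mn \times mn$ block matrix obtained by placing $X_1, X_2, \ldots, X_m$ on the cyclic superdiagonal of blocks and $X_1^t, \ldots, X_m^t$ on the cyclic subdiagonal, with zero blocks elsewhere. This is a symmetric matrix of the form $A \circ Y$, where $Y$ is an $mn \times mn$ symmetric random matrix whose entries are i.i.d. copies of the entries of $X$, and $A$ is the symmetric $0$--$1$ matrix with ones exactly in the two cyclic block-diagonals. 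Call this matrix $W = A \circ Y$.

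The key computation is that $W^m$ has $X_1 X_2 \cdots X_m$ (and its cyclic rotations $X_2 \cdots X_m X_1$, etc.) as its diagonal blocks, so that $\Tr(W^m) = \sum_{j} \Tr(X_j X_{j+1} \cdots X_{j-1})$; since trace is invariant under cyclic permutation of a product, each term equals $\Tr(X_1 \cdots X_m)$, giving $\Tr(W^m) = m \Tr(X_1 \cdots X_m)$. More generally, if $P_k(x) = \sum_{i=0}^k c_i x^i$, then $\Tr(P_k(W))$ decomposes into pieces: the terms $W^i$ with $i$ not a multiple of $m$ have zero diagonal blocks (a walk returning to its starting block must take a multiple of $m$ steps around the cycle, for $m \ge 2$; one should check the cyclic wrap-around carefully, which is where the constant term and low-order behaviour need a moment's attention), hence contribute only a deterministic constant, while the $W^{mi}$ terms reproduce $m$ copies of $\Tr((X_1 \cdots X_m)^i)$. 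Thus $\Tr(P_k(W)) = m\,\Tr(Q(X_1 \cdots X_m)) + (\text{const})$ for the polynomial $Q$ of degree $k$ with $Q(x) = \sum_i c_i x^i$ evaluated appropriately — and consequently $\CZ_{k}(X_1 \cdots X_m) = \CZ_{mk}^{R}(A \circ Y)$ where $R(x) = P_k(x^m)$ is a degree-$mk$ polynomial whose coefficients are still bounded by $\tau$, so Theorem~\ref{thm: Non-symmetric matrix - Total variation} applies with the polynomial $R$ of degree $mk = o(n^{1/10})$ (using that $m$ is fixed).

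It remains to estimate the two quantities appearing in the bound \eqref{eqn:non symmetric:TV} for the matrix $A \circ Y$ with polynomial degree $mk$. Here $\max|a_{ij}| = 1$, and $b_{mn} = \max\{n, \log(mn)\} = n$ for $n$ large (each row of $A$ has exactly $2n$ ones when one counts both block-diagonals — actually $n$ from each, so $\sum_k a_{ik}^2 = 2n$, which is still $\asymp n$). The denominator is $\sum_{I_{mk}} a_{i_1 i_2}^2 \cdots a_{i_{mk} i_1}^2$, i.e. the number of closed non-backtracking-index walks of length $mk$ in the graph of $A$ with all indices distinct. The graph of $A$ is a bipartite-like cyclic structure on $m$ groups of $n$ vertices; a closed walk of length $mk$ with distinct vertices must wind around the $m$-cycle $k$ times, choosing distinct vertices within each group, so the count is $\sim m \cdot (n)_k \cdots \asymp n^{mk}$ up to a factor depending only on $m$ and $k$ (the dominant term being of order $n^{mk}$ since we pick $mk$ distinct indices, roughly $n$ choices each). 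Plugging in: the bound becomes $\lesssim \dfrac{(mk)^5 \sqrt{mn}\, n^{mk-1}}{C_{m,k}\, n^{mk}} \asymp \dfrac{k^5}{\sqrt n} \to 0$ as $n \to \infty$, for fixed $m$ and $k = o(n^{1/10})$.

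The main obstacle I anticipate is the bookkeeping in the second paragraph: verifying precisely that the off-multiples-of-$m$ powers of $W$ contribute only deterministic constants (this uses the block-cyclic structure and $m \ge 2$, and for $m=1$ the statement is just the single i.i.d. matrix case, which should be handled separately or absorbed), and pinning down the exact polynomial $R$ and the constant term so that the numerator and the centred-and-scaled statistic match up cleanly. Once that algebraic identity $\CZ_{k}(X_1\cdots X_m) = \CZ_{mk}^{R}(A\circ Y)$ is established, the combinatorial estimate of $\sum_{I_{mk}} a_{i_1i_2}^2\cdots a_{i_{mk}i_1}^2 \asymp_{m,k} n^{mk}$ and the final substitution into \eqref{eqn:non symmetric:TV} are routine, exactly parallel to Corollary~\ref{cor:xx'}.
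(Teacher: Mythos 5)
The approach is the right shape — embed the product as a block of a larger matrix with a $0$--$1$ variance profile and apply Theorem~\ref{thm: Non-symmetric matrix - Total variation} — but your particular embedding breaks the crucial algebraic identity. You realize $X_1\cdots X_m$ inside a \emph{symmetric} $mn\times mn$ matrix $W = A\circ Y$ that has both $X_i$ and $X_i^t$ present, i.e.\ an \emph{undirected} $m$-cycle of blocks. On an undirected cycle, closed walks of length $i<m$ exist (backtracking), so the diagonal blocks of $W^i$ are \emph{not} zero for $i$ not a multiple of $m$. For instance, already $(W^2)_{11} = X_1X_1^t + X_m^tX_m$, and $\Tr(W^2)$ is a genuinely random sum of squares of entries, not a deterministic constant. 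Consequently $\Tr(P_k(W))$ is \emph{not} equal to $m\,\Tr(Q(X_1\cdots X_m)) + \mathrm{const}$, the claimed identity $\CZ_k(X_1\cdots X_m) = \CZ_{mk}^R(A\circ Y)$ fails, and the rest of the argument (the variance lower bound and the final substitution into \eqref{eqn:non symmetric:TV}) is computing the fluctuation of the wrong random variable. Your construction is also ill-defined at $m=2$, where the cyclic superdiagonal and subdiagonal blocks coincide and would force $X_2 = X_1^t$.

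The fix is to drop the symmetry: take $Y$ to be an $mn\times mn$ \emph{i.i.d.} (non-symmetric) random matrix and let $A$ be the non-symmetric $0$--$1$ block matrix placing ones only on the \emph{directed} cyclic superdiagonal, so that
\begin{align*}
A\circ Y=\l[\begin{array}{cccc}
0 & X_1 & & \\
 & 0 & \ddots & \\
 & & \ddots & X_{m-1}\\
X_m & & & 0
\end{array}\r].
\end{align*}
Theorem~\ref{thm: Non-symmetric matrix - Total variation} does allow a non-symmetric $A$ provided $X$ is an i.i.d.\ matrix — this is exactly the first case of its hypothesis, and is the case the paper invokes here (contrast with Corollary~\ref{cor:xx'}, where $XX^t$ is symmetric and a symmetric embedding is appropriate). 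With the directed cycle, $(A\circ Y)^i$ does have identically zero diagonal blocks unless $m\mid i$, $\Tr\bigl(P_k\bigl((A\circ Y)^m\bigr)\bigr) = m\,\Tr\bigl(P_k(X_1\cdots X_m)\bigr)$ holds on the nose, and your estimates $b_{mn}\asymp n$, $\sum_{I_{mk}}a_{i_1i_2}^2\cdots a_{i_{mk}i_1}^2 \sim n^{mk}$, and the final bound $\lesssim k^5/\sqrt{n}$ then go through exactly as you wrote them.
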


\begin{proof}
For $m=1$, the proof is straight forward. For $m=2$, the idea of the proof is similar to the proof of Corollary \ref{cor:xx'}. The only difference is that here we  start with a $2n\times 2n$ i.i.d. random matrix $Y$ instead of a $2n\times 2n$ symmetric random matrix. Then 
\begin{align*}
	A\circ Y=\l[\begin{array}{cc}
	0 & X_1 \\ X_2 & 0
	\end{array}\r],  \mbox{ where  }  A=\l[\begin{array}{cc}
	0 & A_{12} \\ A_{21} & 0
	\end{array}\r]
	\end{align*}
 and $A_{12}$, $A_{21}$ are two $n\times n$ matrices with all entries $1$. Observe that $X_1,X_2$ are two independent $n\times n$ i.i.d. matrices. Now the rest of the proof is similar to Corollary \ref{cor:xx'}. We skip the detail.
 
Now we prove the result for $m=3$. For $m>3$, it can be proved in a similar way.  Let $Y$ be a $3n\times 3n$ dimensional i.i.d. matrix as in Theorem \ref{thm: Non-symmetric matrix - Total variation}.  Then 
	\begin{align*}
	A\circ Y=\l[\begin{array}{ccc}
	0 & X_1 & 0 \\ 0 & 0 & X_2\\ X_3 & 0 & 0
	\end{array}\r],  \mbox{ where  }  A=\l[\begin{array}{ccc}
	0 & A_{12} & 0 \\ 0 & 0 & A_{23}\\ A_{31} & 0 & 0
	\end{array}\r]
	\end{align*}
	and $A_{12}$, $A_{23}, A_{31}$  are three $n\times n$ matrices with all entries $1$. Observe that $X_1,X_2,X_3$ are three independent  $n\times n$ i.i.d. random matrices and
	\begin{align*}
		(A\circ Y)^3=\l[\begin{array}{ccc}
		X_1X_2X_3 & 0 & 0 \\ 0 & X_2X_3X_1 & 0 \\ 0 & 0 & X_3X_2X_1
		\end{array}\r].
	\end{align*}
Now,	 let $Q_{3k}(x)=P_k(x^3)$. Then $\Tr (Q_{3k}(A\circ Y))=3\Tr (P_k(X_1X_2X_3))$ and 
	\begin{align*}
	\CZ_{k}( X_1X_2X_3)&=\frac{\Tr (P_{k}(X_1X_2X_3))-\E[\Tr (P_{k}(X_1X_2X_3))]}{\sqrt{\var \Tr (P_{k}(X_1X_2X_3))}}\\
	&=\frac{\Tr (Q_{3k}(A\circ Y))-\E[\Tr (Q_{3k}(A\circ Y))]}{\sqrt{\var \Tr (Q_{2k}(A\circ Y))}}=:\CZ_{3k}^Q( A\circ Y) \mbox{ (say)}.
	\end{align*}
	Note that in $A\circ Y$, $b_{3n}=n$ and we have 
	\begin{align*}
	&\sum_{I_{3k}}a_{i_1i_2}^2\cdots a_{i_{3k}i_1}^2 \\=& \sum_{i_1\neq i_4\neq \cdots\neq i_{3k-2}=1}^n\sum_{i_2\neq i_5\neq \cdots\neq  i_{3k-1}=n+1}^{2n} \sum_{i_3\neq i_6\neq \cdots\neq i_{3k}=2n+1}^{3n} a_{i_1i_2}^2\cdots a_{i_{3k}i_1}^2 \sim n^{3k}.
	\end{align*}
	Therefore from Theorem \ref{thm: Non-symmetric matrix - Total variation}, for $k=o( n^{1/10})$, we get 
	\begin{align*}
	d_{TV}(\CZ_{3k}^Q( A\circ Y), Z)\lesssim \frac{(3k)^5\sqrt {3n} n^{3k-1}}{n^{3k}}\lesssim \frac{k^5}{\sqrt {n}} \to 0, \mbox{ as $n\to \infty$}. 
	\end{align*}
	Hence the result.	 
\end{proof}
In the last corollary we considered finite product of independent copies of i.i.d. random matrices. However, using the same technique one can study the fluctuation of linear eigenvalue statistics for product of rectangular random matrices with i.i.d. entries. Suppose  $X_1,X_2,\ldots,X_m$ are independent rectangular random matrices with i.i.d. entries of dimensions $n_1\times n_2,n_2\times n_3,\ldots,n_m\times n_1$, respectively. With out loss of generality we assume that $n_1=\min_{1\leq k\leq m}n_k$. Now following the idea of the proofs of Lemma \ref{cor:xx'} and Lemma \ref{cor:productofmatrices}, it is easy to show that the fluctuation of linear eigenvalue statistics of $X_1X_2\cdots X_m$ is Gaussian if  $\max_{1\leq k\leq m}n_k/n_1\to c<\infty$ as $n_1\to\infty$.
   
In the next corollary we consider  $A$ as an adjacency matrix of an Erd\"os-R\'enyi random graph $G(n,p_{n})$.
\begin{cor}\label{cor: for Erdos - Renyi graph}
	Let $X$ be a random matrix as defined in Theorem \ref{thm: Non-symmetric matrix - Total variation}, and $A$ be the adjacency matrix of a random Erd\"os-R\'enyi random graph $G(n,p_{n})$ independent of $X$, where $p_{n}\geq n^{-\gamma}$ for some $\gamma\in  [0,1/2)$. Let $P_{k}$ be a polynomial of degree $k$, where $k=o(\log n/\log (2/p_{n}))$. Then 
	\begin{align*}
	d_{TV}(\CZ_{k}(A\circ X),Z)\to 0,\;\;\;\text{almost surely as $n\to\infty$.}
	\end{align*}
	Here almost surely is with respect to the probability measure on $G(n,p_{n})$.
\end{cor}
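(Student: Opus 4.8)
The plan is to apply Theorem~\ref{thm: Non-symmetric matrix - Total variation} conditionally on the graph, exploiting that $A$ and $X$ are independent, and then to show that the (now random) right‑hand side of \eqref{eqn:non symmetric:TV} tends to $0$ for almost every realization of the sequence $\{G(n,p_n)\}_{n}$. To make the almost sure statement precise one realizes all the graphs on one probability space, e.g.\ through an i.i.d.\ array $(U_{ij})_{i<j}$ of uniform $[0,1]$ variables, declaring $\{i,j\}$ an edge of $G_n$ iff $U_{ij}\le p_n$; it then suffices to check that each event we use occurs with a probability whose complement is summable in $n$, and to apply the Borel--Cantelli lemma. Since $A$ is a $\{0,1\}$-matrix, $a_{ij}^2=a_{ij}$ and $\max_{i,j}|a_{ij}|=1$ as soon as $G_n$ has an edge, which holds for all large $n$ almost surely because $\P(G_n\text{ is edgeless})=(1-p_n)^{\binom n2}$ is summable. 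Moreover every row sum $\sum_{k}a_{ik}^2$ is a vertex degree, so $b_n\le n$ deterministically; hence the numerator in \eqref{eqn:non symmetric:TV} is $\lesssim k^5\sqrt n\, n^{k-1}$.

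The heart of the argument is a lower bound, holding almost surely for all large $n$ and uniformly over the admissible range of $k$, on the denominator
\[
D_k:=\sum_{I_k}a_{i_1i_2}^2a_{i_2i_3}^2\cdots a_{i_ki_1}^2=2k\,N_k,
\]
where $N_k$ is the number of $k$-cycles in $G_n$. Since $\E N_k=\tfrac{(n)_k}{2k}\,p_n^{k}$ and $k=o(\sqrt n)$, \eqref{order of I_k} gives $\E D_k=(n)_k p_n^{k}\sim n^k p_n^{k}$; also $np_n\ge n^{1-\gamma}\ge\sqrt n\to\infty$, so $\E N_k\to\infty$. The plan is to prove $N_k\ge\tfrac12\E N_k$, hence $D_k\gtrsim n^k p_n^{k}$, almost surely for all large $n$, by the second moment method: the standard variance computation for cycle counts, whose dominant contribution comes from pairs of $k$-cycles sharing exactly one edge, gives $\var(N_k)\lesssim n^{2k-2}p_n^{2k-1}$ up to a factor polynomial in $k$, so that $\var(N_k)/(\E N_k)^2$ is, up to a $k$-polynomial factor, at most $1/(n^2 p_n)\le n^{\gamma-2}$, which is $\le n^{-3/2}$ since $\gamma<1/2$. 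Chebyshev's inequality then bounds $\P(N_k\le\tfrac12\E N_k)$ by a $k$-polynomial times $n^{-3/2}$; as $k=k_n$ grows at most poly-logarithmically this is summable in $n$, and Borel--Cantelli finishes the claim. (Janson's inequality for the lower tail of subgraph counts would serve equally well.)

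On the almost sure event identified above, Theorem~\ref{thm: Non-symmetric matrix - Total variation} therefore yields, for all large $n$,
\begin{align*}
d_{TV}\l(\CZ_k(A\circ X),Z\r)\lesssim\frac{k^5\sqrt n\, n^{k-1}}{n^k p_n^{k}}=\frac{k^5}{\sqrt n\, p_n^{k}}\le\frac{k^5}{\sqrt n}\Big(\frac{2}{p_n}\Big)^{k}=\frac{k^5}{\sqrt n}\,e^{k\log(2/p_n)}\longrightarrow 0,
\end{align*}
because $k=o\big(\log n/\log(2/p_n)\big)$ forces $e^{k\log(2/p_n)}=n^{o(1)}$ while $k^5=n^{o(1)}$. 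This is precisely the assertion of the corollary.

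The step I expect to be the main obstacle is the almost sure lower bound on $N_k$: the variance estimate must be carried out carefully enough that its dependence on $k$ remains polynomial (so that the Borel--Cantelli series still converges when $k$ grows with $n$), and one has to confirm that the mean and variance asymptotics, together with the truncation $(n)_k\sim n^k$, remain valid throughout the whole range $k=o(\log n/\log(2/p_n))$. Everything else amounts to routine manipulation of \eqref{eqn:non symmetric:TV}.
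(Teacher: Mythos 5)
Your proof has the same overall structure as the paper's: condition on the realization of the graph, apply Theorem~\ref{thm: Non-symmetric matrix - Total variation}, and show the resulting random right‑hand side of \eqref{eqn:non symmetric:TV} tends to $0$ almost surely via Borel--Cantelli. The genuine difference lies in the concentration argument for the denominator $D_k:=\sum_{I_k}a_{i_1i_2}^2\cdots a_{i_ki_1}^2$. The paper's Lemma~\ref{lem:erdosreny} treats $f(A)=D_k$ directly as a function of the $\binom n2$ independent edge indicators and applies McDiarmid's bounded difference inequality (Result~\ref{re1}): resampling one entry of $A$ changes $f$ by at most $2kn^{k-2}$, which yields the sub-Gaussian tail $\exp\{-\delta^2n^2p_n^{2k}/(2k^2)\}$, summable by a huge margin because $np_n^k/k\geq\sqrt n/\log n$ under the stated hypotheses. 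You instead observe $D_k=2kN_k$ with $N_k$ the number of $k$-cycles (valid for $k\geq 3$; the cases $k\leq 2$ are trivial) and run a second-moment/Chebyshev argument. This does work in the stated regime: the contribution to $\var(N_k)$ from pairs of cycles sharing $s$ edges carries a factor of order $(C/(np_n))^s$ relative to $(\E N_k)^2$, and since $np_n\geq n^{1/2}\to\infty$ the geometric sum is dominated by $s=1$, giving $\var(N_k)/(\E N_k)^2\lesssim k^2/(n^2p_n)\leq k^2n^{\gamma-2}$, whose poly$(k)$ prefactor times $n^{-3/2}$ is summable. But, as you rightly flag, the bookkeeping needed to confirm the prefactor really is polynomial in $k$ (the naive count of shared-edge configurations has exponential-in-$k$ pieces that must be absorbed into the $(np_n)^{-s}$ decay) is more delicate than the one-line bounded-difference estimate, and Chebyshev gives only a polynomial tail where McDiarmid gives an exponential one. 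A secondary difference: you bound $b_n\leq n$ deterministically, producing $d_{TV}\lesssim k^5/(\sqrt n\,p_n^k)$, while the paper also proves $b_n\leq(1+\epsilon_n)np_n$ a.s.\ (Lemma~\ref{lem:erdosreny}(ii)), producing $d_{TV}\lesssim k^5(1+\epsilon_n)^{k-1}/(p_n\sqrt n)$. Both go to zero under $k=o(\log n/\log(2/p_n))$, though the paper's sharper bound leaves visibly more room in the exponent; your version leans on the hypothesis on $k$ specifically to tame the $p_n^{-k}$ factor, which is fine and is exactly what the hypothesis is there for.
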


The following lemma will be used in the proof of Corollary \ref{cor: for Erdos - Renyi graph}.
\begin{lemma}\label{lem:erdosreny}
	Let $A$ be the matrix as defined in Corollary \ref{cor: for Erdos - Renyi graph} with $p_{n}\geq n^{-\gamma}$ for some fixed $\gamma \in [0,1/2)$. Then
	\begin{align*}
	(i)\;\;&\frac{1}{(np_{n})^k}\sum_{I_k}a_{i_1i_2}^2\cdots a_{i_ki_1}^2 \to 1, \;\;\mbox{almost surely,  as $n\to \infty$},\\
	(ii)\;\;&\max_{i}\sum_{j=1}^{n}a_{ij}^{2}\leq (1+\epsilon_{n})np_{n},\;\;\text{almost surely, as $n\to\infty$},
	\end{align*}
	for $k=o(\log n/\log (2/p_{n}))$ and $\epsilon_{n}=O(n^{-\alpha}p_{n}^{-1})$ with $\alpha\in (\gamma, 1/2)$.
\end{lemma}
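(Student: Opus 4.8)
The plan is to prove both parts by standard concentration for the random graph $G(n,p_n)$, uniform in the growing parameters. Throughout I use that the adjacency matrix $A$ is $0/1$, so $a_{ij}^2=a_{ij}$ and $\{a_{ij}:1\le i<j\le n\}$ are i.i.d.\ $\mathrm{Bernoulli}(p_n)$, and that $p_n\ge n^{-\gamma}$ with $\gamma<1/2$ forces $np_n\ge n^{1-\gamma}$ with exponent $>1/2$. For part~(ii), $D_i:=\sum_j a_{ij}$ is the degree of vertex $i$, a sum of $n-1$ i.i.d.\ $\mathrm{Bernoulli}(p_n)$ variables with mean $(n-1)p_n$. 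I would take $\delta=\epsilon_n:=Cn^{-\alpha}p_n^{-1}$; since $\alpha>\gamma$ we have $\epsilon_n\le Cn^{\gamma-\alpha}\to0$, so $\delta<1$ for large $n$. A one-sided Chernoff/Bernstein bound gives $\mathbb P(D_i\ge(1+\delta)(n-1)p_n)\le\exp(-c\delta^2(n-1)p_n)$, and $\delta^2(n-1)p_n\gtrsim n^{-2\alpha}p_n^{-2}\cdot np_n=n^{1-2\alpha}p_n^{-1}\ge n^{1-2\alpha}$, which grows polynomially because $\alpha<1/2$. A union bound over $i\in[n]$, with $(1+\epsilon_n)(n-1)p_n\le(1+\epsilon_n)np_n$, gives $\mathbb P(\max_i D_i\ge(1+\epsilon_n)np_n)\le n\exp(-cn^{1-2\alpha})$, which is summable in $n$; Borel--Cantelli then finishes~(ii).

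For part~(i), write $N_k:=\sum_{I_k}a_{i_1i_2}^2\cdots a_{i_ki_1}^2=\sum_{I_k}a_{i_1i_2}\cdots a_{i_ki_1}$; for $k\ge3$ (the relevant range) this is $2k$ times the number of $k$-cycles in $G(n,p_n)$. A tuple in $I_k$ with $k\ge3$ traverses $k$ distinct edges, so by independence $\mathbb E N_k=|I_k|p_n^k$; the hypothesis forces $k=o(\log n)=o(\sqrt n)$, so \eqref{order of I_k} gives $|I_k|/n^k\to1$, hence $\mathbb E N_k/(np_n)^k\to1$. It then suffices to show $N_k/\mathbb E N_k\to1$ a.s., which I would obtain from a variance bound and Borel--Cantelli. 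In $\var(N_k)=\sum_{(\vec i,\vec j)\in I_k^2}\Cov$ the covariance of the two cycle-indicators vanishes unless the cyclic edge sets of $\vec i$ and $\vec j$ share an edge, and if they share exactly $s\ge1$ edges it equals $p_n^{2k-s}-p_n^{2k}\le p_n^{2k-s}$. The combinatorial core is to bound the number of ordered pairs sharing exactly $s$ edges by $\lesssim n^{2k-s-1}(Ck^2)^s/s!$: fix $\vec i$ ($\le n^k$ ways), choose which $s$ of its $k$ edges are shared ($\binom ks$ ways, which groups them into $r\le s$ maximal paths), choose the set of the other $k-s-r\le k-s-1$ vertices of $\vec j$ ($\binom{n}{k-s-r}$ ways), and wire the $r$ paths and these vertices into a single $k$-cycle; the last step has a factorial-in-$k$ number of options, but the $\binom{n}{k-s-r}\approx n^{k-s-r}/(k-s-r)!$ in the previous step absorbs all but a $(\mathrm{poly}(k))^{s}$ factor of it, so the reduction in free vertices saves essentially a full power of $n$. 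Summing over $s$, $\var(N_k)/(\mathbb E N_k)^2\lesssim n^{-1}\sum_{s\ge1}(Ck^2/np_n)^s/s!\lesssim k^2/(n^2p_n)$, using $k^2/np_n\to0$. Since $k^2/(n^2p_n)\lesssim k^2n^{\gamma-2}$ is summable in $n$, Chebyshev and Borel--Cantelli give $N_k/\mathbb E N_k\to1$ a.s., and with the first-moment asymptotics this proves~(i).

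The step I expect to be the real obstacle is the uniform-in-$k$ second moment in~(i): counting pairs of length-$k$ cycles overlapping in a prescribed edge set without incurring a $k!$-type overcount --- which would shrink the admissible range of $k$ --- and then verifying that the resulting ratio $\var(N_k)/(\mathbb E N_k)^2$ is not merely $o(1)$ but summable in $n$, so that Borel--Cantelli applies along the sequence $k=k_n$.
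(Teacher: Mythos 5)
Your part~(ii) matches the paper's in substance: both use a one-sided concentration bound for the row sums (you use Chernoff, the paper applies the bounded-difference inequality of Result~\ref{re1} to $\sum_j a_{1j}$), then a union bound over rows and Borel--Cantelli; the exponents $\epsilon_n^2np_n$ vs.\ $\epsilon_n^2np_n^2$ differ slightly but both reduce to a polynomially growing $n^{1-2\alpha}$, so both are summable. For part~(i), however, you take a genuinely different route. The paper again invokes the bounded-difference inequality: viewing $f(A)=\sum_{I_k}a_{i_1i_2}^2\cdots a_{i_ki_1}^2$ as a function of the $O(n^2)$ independent entries of the upper triangle, it observes that flipping a single $a_{ij}$ changes $f$ by at most $2k|I_{k-2}|\le 2kn^{k-2}$ (an edge lies in at most that many tuples in $I_k$), so McDiarmid yields $\P\l(|f(A)-\E f(A)|>\delta(np_n)^k\r)\le \exp\l\{-\delta^2 n^2p_n^{2k}/(2k^2)\r\}$; the hypothesis $k=o(\log n/\log(2/p_n))$ then forces $np_n^k/k\gtrsim \sqrt n/\log n$, making this summable. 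That one-line Lipschitz estimate entirely bypasses the combinatorics of overlapping cycle pairs, which is precisely the step you flag as the real obstacle in your second-moment method. Your sketched bound $\lesssim n^{2k-s-1}(Ck^2)^s/s!$ on pairs sharing exactly $s$ edges is the right shape --- the $(k-s-r)!$ in $\binom{n}{k-s-r}$ cancels the cyclic-arrangement factorial, and $r\ge1$ costs one free vertex, whence the $n^{-1}$ --- and the resulting ratio $\var(N_k)/(\E N_k)^2\lesssim k^2/(n^2p_n)\lesssim(\log n)^2n^{\gamma-2}$ is indeed summable, so your route does reach the conclusion. But that combinatorial lemma is only sketched (in particular the wiring count uniformly in the growing $k$ and over the number $r$ of shared arcs needs to be written out), whereas the paper's Lipschitz bound is essentially immediate and gives sub-Gaussian rather than merely polynomial tails. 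The difference is one of economy and completeness, not of correctness: to make your proposal a full proof you would need to supply the pair-counting argument in detail, and that is nontrivially more work than the paper's approach.
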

The following inequality will be used in the proof of Lemma \ref{lem:erdosreny}.

\begin{result}[Bounded difference inequality]\cite[Lemma 1.2]{mcdiarmid1989method}\label{re1}
	Let $Z=f(X_1,X_2,\ldots, X_n)$ be a function of independent random variables $X_1,\ldots, X_n$. Let $X_i'$ be an independent copy of $X_i, i=1,\ldots, n. $ Suppose $c_1, \ldots, c_n$ are constants such that for each $i$,
	$$
	|f(X_1,\ldots,X_{i-1},X_i', X_{i+1}, \ldots, X_n)-f(X_1,\ldots, X_n)|\le c_i \;\mbox{almost surely}.
	$$
	Then for any $t\ge 0$ we have 
	$$
	\P(Z-\E [Z]\ge t)\le \exp\l\{-\frac{2t^2}{\sum_{i=1}^{n}c_i^2}\r\}.
	$$
\end{result}

\begin{proof}[Proof of Lemma \ref{lem:erdosreny}]
The entries of $A$ can be thought as a vector of length $n(n+1)/2$, use dictionary order. Define 
\begin{align*}
	f(A)=\sum_{I_k}a_{i_1i_2}^2\cdots a_{i_ki_1}^2 .
\end{align*} 
Let $a_{ij}'$ be an independent copy of $a_{ij}$, for $1\leq i\leq j\leq n$. Then 
\begin{align*}
	|f(A)-f(A-a_{12}e_{1}e_{2}^{t}+a_{12}'e_{1}e_{2}^{t})|
	&=k|a_{12}^2-a_{12}'^2|\sum_{I_{k-2}'}a_{2i_1}^2a_{i_1i_2}^2\cdots a_{i_{k-1}1}^2
	\\&\le 2k|I_{k-2}'|\le 2kn^{k-2}.
\end{align*}
Similarly, for $1\le i, j\le n$, we have 
\begin{align*}
	|f(A)-f(A-a_{ij}e_{i}e_{j}^{t}+a_{ij}'e_{i}e_{j}^{t})|\le 2kn^{k-2}.
\end{align*}
Note that only $(i,j)$-th element $a_{ij}$ is replaced by $a_{ij}'$. Since $a_{i_1i_2},\ldots, a_{i_ki_1}$, for $(i_1,\ldots, i_k)\in I_k$, are independent random Bernoulli random variables with parameter $p_{n}$, we have 
\begin{align*}
	\E\left[\sum_{I_k}a_{i_1i_2}^2\cdots a_{i_ki_1}^2 \right]=|I_k|p_{n}^k.
\end{align*}
 Using Stirling's approximation, we have
  \begin{align*}
 \frac{1}{n^{k}}|I_{k}|=\frac{n!}{n^{k}(n-k)!}\sim e^{-k}\left(1-\frac{k}{n}\right)^{-n+k}\sim 1,
 \end{align*} 
 where the last asymptotic equality follows from $k=o(\sqrt{n})$. Now, using the Result \ref{re1}, for $\delta>0$, we get 
\begin{align*}
\P\l(\l|\frac{\sum_{I_k}a_{i_1i_2}^2\cdots a_{i_ki_1}^2 -\E[\sum_{I_k}a_{i_1i_2}^2\cdots a_{i_ki_1}^2 ]}{(np_{n})^k}\r|>\delta\r)\le \exp\left\{-\frac{\delta^2n^{2}p_{n}^{2k}}{2k^2}\right\}.
\end{align*}
According to the choices of $k$ and $p_{n}$, we have $np_{n}^{k}/k\geq \sqrt{n}/\log n$. Hence the result (i) follows from the Borel-Cantelli lemma.

To prove (ii), let us define
\begin{align*}
\Omega_{n}=\{A\in G(n,p_{n}): \sum_{i=1}^{n}a_{ij}^{2}\leq (1+\epsilon_{n})np_{n}, \mbox{ for all}\;i\},
\end{align*}
where $\epsilon_{n}=p_{n}^{-1}n^{-\alpha}$. Using union bound and result \ref{re1}, 
\begin{align*}
\P(\Omega_{n}^{c})&\leq n \P\left(\sum_{i=1}^{n}a_{1j}>(1+\epsilon_{n})np_{n}\right)\\
&\leq n\exp\{-\epsilon_{n}^{2}np_{n}^{2}/2\}.
\end{align*}
According the choice of $\epsilon_{n}$, and $p_{n}$, we have $\epsilon_{n}^{2}np_{n}^{2}\geq n^{1-2\alpha}$. Consequently, $\sum_{n=1}^{\infty}n\exp\{-\epsilon_{n}^{2}np_{n}^{2}\}<\infty$. Hence the result (ii) follows from the Borel-Cantelli lemma.
\end{proof}

\begin{proof}[Proof of Corollary \ref{cor: for Erdos - Renyi graph}]
	 By Theorem \ref{thm: Non-symmetric matrix - Total variation}, Lemma \ref{lem:erdosreny} almost surely for each $A$, we have
	\begin{align*}
		d_{TV}(\CZ_{k}(A\circ X),Z)&\le \frac{k^5\sqrt{n}b_n^{k-1}}{\sum_{I_k}a_{i_1i_2}^2\cdots a_{i_ki_1}^2}
		\leq \frac{k^{5}\sqrt{n}(1+\epsilon_{n})^{k-1}(np_{n})^{k-1}}{\sum_{I_k}a_{i_1i_2}^2\cdots a_{i_ki_1}^2}\\
		&= \frac{k^{5}(1+\epsilon_{n})^{k-1}}{p_{n}\sqrt{n}}\frac{(np_{n})^{k}}{\sum_{I_k}a_{i_1i_2}^2\cdots a_{i_ki_1}^2}.
	\end{align*}
If we choose $k=o(\log n/\log (2/p_{n}))$ , then the above converges to zero as $n\to\infty$. Hence the result.
\end{proof}
In the following corollary we consider the variance profiles which have an $m\times m$ block of ones along the diagonal, where $m$ is of the order of $n$. So this corollary  captures the  fluctuation of linear eigenvalue statistics  of a specific type of  (symmetric or non-symmetric) sparse random matrices which have a block of non-zero entries  of dimension $cn\times cn$ along the diagonal for some $c\in(0,1)$. For more results on fluctuations of symmetric sparse random matrices, we refer to \cite{shcherbina2015}.  
\begin{corollary}\label{cor_iid_sparse}
Let  $P_{k}$, $X$, $A$ be as in Theorem \ref{thm: Non-symmetric matrix - Total variation}. Assume that there exists $J\subseteq [n]$ such that $a_{ij}=1$ for all $i, j\in J$, where $|J|\geq cn$ for some $0<c < 1$. Then for any $k=o(\log n)$,
$$d_{TV}\l(\CZ_{k}(A\circ X),Z\r)\rightarrow 0\; \mbox{ as $n\rightarrow \infty$.}$$
\end{corollary}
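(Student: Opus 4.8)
The plan is to apply Theorem~\ref{thm: Non-symmetric matrix - Total variation} directly and simply estimate the two quantities that appear in the bound~\eqref{eqn:non symmetric:TV}: the elementary numerator factor $(\max a_{ij})^2\,b_n^{k-1}$, and the cyclic weight sum $\sum_{I_k} a_{i_1i_2}^2 a_{i_2i_3}^2\cdots a_{i_ki_1}^2$ in the denominator. Since the entries of $A$ satisfy $0\le a_{ij}\le 1$ with $a_{ij}=1$ for $i,j\in J$, we have $\max_{i,j}a_{ij}=1$, and $\sum_{k}a_{ik}^2\le n$, $\sum_k a_{kj}^2\le n$, so that $b_n\le n$. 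This disposes of the numerator: it is $\lesssim k^5\sqrt n\, n^{k-1}$.

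For the denominator, I would keep only those tuples $(i_1,\ldots,i_k)\in I_k$ all of whose coordinates lie in $J$; for such a tuple every factor $a_{i_p i_{p+1}}^2$ equals $1$, so the corresponding term equals $1$, and there are exactly $|J|(|J|-1)\cdots(|J|-k+1)$ of them. Because $|J|\ge cn$ and $k=o(\log n)$ is in particular $o(\sqrt n)$, Stirling's approximation (exactly as in~\eqref{order of I_k}) gives $|J|(|J|-1)\cdots(|J|-k+1)\sim |J|^k$, which is $\gtrsim (cn)^k$ for all large $n$. Hence $\sum_{I_k} a_{i_1i_2}^2\cdots a_{i_ki_1}^2\gtrsim (cn)^k$.

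Plugging these estimates into~\eqref{eqn:non symmetric:TV} gives
\[
d_{TV}\l(\CZ_k(A\circ X),Z\r)\lesssim \frac{k^5\sqrt n\, n^{k-1}}{(cn)^k}=\frac{k^5}{c^k\sqrt n}.
\]
It remains to check that this tends to $0$. Writing $k\le \eps_n\log n$ with $\eps_n\to 0$, we get $c^{-k}=e^{k\log(1/c)}\le n^{\eps_n\log(1/c)}=n^{o(1)}$, since $0<c<1$ is fixed, while $k^5=o((\log n)^5)=n^{o(1)}$ as well; therefore $k^5 c^{-k}/\sqrt n=n^{o(1)-1/2}\to 0$, which proves the corollary.

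There is no genuine obstacle here; the one point needing a little care is that the hypothesis $k=o(\log n)$ is exactly what is required to overcome the exponentially growing factor $c^{-k}$ produced by the crude lower bound $|J|\ge cn$ on the size of the all-ones block, and that it is also safely inside the range $k=o(\sqrt n)$ needed for the Stirling estimate on the falling factorial $|J|(|J|-1)\cdots(|J|-k+1)$.
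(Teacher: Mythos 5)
Your proposal is correct and follows essentially the same route as the paper: bound $b_n\le n$ and $\max a_{ij}=1$, lower-bound the cyclic sum by restricting to distinct tuples inside $J$, and plug into Theorem~\ref{thm: Non-symmetric matrix - Total variation}. If anything, you are slightly more careful than the paper's terse one-line proof, which writes $\sum_{J^k}a_{i_1i_2}^2\cdots a_{i_ki_1}^2=c^kn^k$ without spelling out the falling-factorial/Stirling step that justifies the $\gtrsim (cn)^k$ comparison for $k=o(\sqrt n)$, and without explicitly verifying that $k^5 c^{-k}/\sqrt n\to 0$ under $k=o(\log n)$.
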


\begin{proof}
	We observe that $b_n\leq n$ and from the assumption,  we have 
	\begin{align*}
		\sum_{I_k}a_{i_1i_2}^2\cdots a_{i_ki_1}^2\gtrsim \sum_{J^k}a_{i_1i_2}^2\cdots a_{i_ki_1}^2=c^{k}n^k.
	\end{align*}
	Now using the above estimates in Theorem \ref{thm: Non-symmetric matrix - Total variation} we have the result.
\end{proof}

\begin{remark} \label{rem: specially structred matrices} At first glance the assumption on $A$ in Corollary \ref{cor_iid_sparse} looks quite similar to \textit{super regularity} or \textit{broad connectivity} condition (see \cite{cook2016limiting}). For readers' convenience we give the definitions of super regularity and broad connectivity in  Section \ref{sec: Appendix}. But a close check reveals that the assumption in Corollary \ref{cor_iid_sparse}  is  weaker  than super regularity, and it is not comparable with broad connectivity. To see the difference from broad connectivity consider the following examples:
\begin{enumerate}[(i)]
\item Consider an $n\times n$ matrix $A$ of the form
\begin{align*}
A=\frac{1}{\sqrt{n}}\left[\begin{array}{ccccc}
0 & A_{12} & A_{13} & 0 & 0\\
0 & 0 & A_{23} & A_{24} & 0\\
0 & 0 & 0 & A_{34} & A_{35}\\
A_{41} & 0 & 0 & 0 & A_{45}\\
A_{51} & A_{52} & 0 & 0 & 0
\end{array}\right],
\end{align*}
where each $A_{ij}$ is $\lfloor \frac{n}{5}\rfloor\times \lfloor \frac{n}{5}\rfloor$ matrices with all $1$. The above matrix does not satisfy the assumption of Corollary \ref{cor_iid_sparse}, since there does not exist any index set $J\subset [n]$ such that $a_{ij}=1$ for all $i,j\in J$.  However, $A$ is a $(\delta, \nu)$-broadly connected matrix. Now we see that  $\CZ_{k}(A)\equiv 0$ for all $n$ and $k=1,2$, that is, $\CZ_{k}(A)\not\to Z$.

\item Now consider an $n\times n$ matrix $A$ of the form
\begin{align*}
A=\frac{1}{\sqrt{n}}\left[\begin{array}{ccccc}
0 & A_{12} & 0 & 0 & 0\\
0 & 0 & A_{23} & 0 & 0\\
0 & 0 & 0 & A_{34} & 0\\
0 & 0 & 0 & 0 & A_{45}\\
A_{51} & 0 & 0 & 0 & 0
\end{array}\right],
\end{align*}
where each $A_{ij}$ is $\lfloor \frac{n}{5}\rfloor\times \lfloor \frac{n}{5}\rfloor$ matrices with all $1$. In this case, it is easy to see that $\CZ_{k}(A\circ X)\equiv 0$ if $k$ is not a multiple of $5$. In other cases, $\CZ_{5l}(A\circ X)\to Z$ as $n\to\infty$. Here also, $A$ is a $(\delta, \nu)$-broadly connected matrix  but it does not satisfy the assumptions of Corollary \ref{cor_iid_sparse}.

\item Consider the following $n\times n$ matrix:
$$A=\left[\begin{array}{cc}
A_{11} & A_{12}\\
A_{21} & O
\end{array}
\right]
$$
where $A_{11}$ is $n/4 \times n/4$ matrix will all entries 1, $A_{12}$ is $n/4 \times 3n/4$ matrix will all entries 1, $A_{21}$ is $3n/4 \times n/4$ matrix will all entries 1 and $O$ is $3n/4 \times 3n/4$ matrix will all entries $0$. The matrix $A$ satisfies the assumptions of Corollary \ref{cor_iid_sparse} but it is not $(\delta, \nu)$-broadly connected. Observe that $\delta\leq 1/4$ and $A$ does not satisfy condition (iii) of the definition of broad connectivity (see Section \ref{sec: Appendix}).
For example, take $J=\{n/4 +1,\ldots, n\}$. Then $N_{A^T}(J)=\{i\in [n]: |N_A(i)\cap J|\geq \delta \frac{3n}{4}\}=\{1,2,\ldots,n/4\}.$ So $|N_{A^T}(J)|=n/4 \ngtr \min\{n,(1+\nu)\frac{3n}{4}\}$. 
However, $\CZ_{k}(A\circ X)$ will converge to $Z$ in total variation norm for  $k=o(\sqrt n)$.
\end{enumerate}
\end{remark}

In Remark \ref{rem: specially structred matrices} we showed  that the fluctuations of linear eigenvalue statistics are not Gaussian  in general for some specially structured variance profile matrices. But such specially structured matrix can arise in a Erd\"os-R\'enyi random graph. So an obvious question arise, whether our observation in Remark \ref{rem: specially structred matrices} contradicts  Corollary \ref{cor: for Erdos - Renyi graph} or not? It does not contradict, since in  Erd\"os-R\'enyi random graphs the probability that any special structure emerges in the graph (and hence in the adjacency matrix) is almost zero.

\begin{remark}\label{anti-diagonal}From Corollary \ref{cor_iid_sparse}, it also follows that the linear eigenvalue statistics of (symmetric and non-symmetric) anti-diagonal band  random  matrices  converge to Gaussian distribution in total variation norm when the band length is of  the order of $n$. By anti-diagonal random band matrix, we mean the matrices of the form $A\circ X$ with the following type of  $A$: 
$$
A=\left[\begin{array}{cccccccc}
0 & 0 & 0 & 0 & 1 & 1 & 1 & 1\\
0 & 0 & 0 & 1 & 1 & 1 & 1 & 0\\
0 & 0 & 1 & 1 & 1 & 1 & 0 & 0\\
0 & 1 & 1 & 1 & 1 & 0 & 0 & 0\\
1 & 1 & 1 & 1 & 0 & 0 & 0 & 0\\
1 & 1 & 1 & 0 & 0 & 0 & 0 & 1\\
1 & 1 & 0 & 0 & 0 & 0 & 1 & 1\\
1 & 0 & 0 & 0 & 0 & 1 & 1 & 1
\end{array}\right]
\;\mbox{ or }\;
A=\left[\begin{array}{cccccccc}
0 & 0 & 0 & 0 & 1 & 1 & 1 & 1\\
0 & 0 & 0 & 1 & 1 & 1 & 1 & 1\\
0 & 0 & 1 & 1 & 1 & 1 & 1 & 1\\
0 & 1 & 1 & 1 & 1 & 1 & 1 & 1\\
1 & 1 & 1 & 1 & 1 & 1 & 1 & 0\\
1 & 1 & 1 & 1 & 1 & 1 & 0 & 0\\
1 & 1 & 1 & 1 & 1 & 0 & 0 & 0\\
1 & 1 & 1 & 1 & 0 & 0 & 0 & 0
\end{array}\right].$$
\end{remark}

\section{Proof of Theorem \ref{thm: Non-symmetric matrix - Total variation}}\label{sec: Proof of main theorems about symmetric and non symmetric matrices}
In this section we give the proof of  Theorem \ref{thm: Non-symmetric matrix - Total variation}. The following lemmata will be used in the proof  of  Theorem \ref{thm: Non-symmetric matrix - Total variation}.

\subsection{Preliminary lemmas}\label{preliminary lemmas}
\begin{lemma}\label{lem:variance}
Suppose $A$ and $X$ are as in Theorem \ref{thm: Non-symmetric matrix - Total variation}. Then 	
\begin{align}\label{variance}
\var (\Tr[(A\circ X)^k])\geq \sum_{I_k}a_{i_1i_2}^2\cdots a_{i_ki_1}^2,
\end{align}
where $I_k=\{(i_1,\ldots,i_k)\in [n]^k \;:\; i_p \neq i_q,\;\mbox{for all}\;p\neq q\}$. 
\end{lemma}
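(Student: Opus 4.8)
The plan is to read off $\var\big(\Tr[(A\circ X)^k]\big)$ as the squared $L^2$-norm of the centred statistic $f:=\Tr[(A\circ X)^k]-\E[\Tr[(A\circ X)^k]]$ and to bound it from below by the squared length of the projection of $f$ onto a finite orthonormal system built from ``cycle monomials''. Expanding, $\Tr[(A\circ X)^k]=\sum_{(i_1,\dots,i_k)\in[n]^k}a_{i_1i_2}\cdots a_{i_ki_1}\,X_{i_1i_2}\cdots X_{i_ki_1}$. First I single out the tuples $\vec i=(i_1,\dots,i_k)\in I_k$. For $k\ge 3$ such a tuple describes a closed walk with pairwise distinct vertices, hence traverses $k$ \emph{distinct} edges (ordered pairs in the i.i.d.\ model, unordered pairs in the symmetric model), so $m_{\vec i}:=X_{i_1i_2}\cdots X_{i_ki_1}$ is a product of $k$ distinct independent symmetric variance-one variables. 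Two such tuples yield the same monomial precisely when they traverse the same $k$-cycle of edges, which happens for the $k$ cyclic rotations of $\vec i$ (and, in the symmetric model, also for the $k$ reversals). Let $m_1,\dots,m_N$ list the distinct monomials so obtained. Since the $X_{ij}$ are independent, symmetric (hence mean-zero), and of variance one, each $m_r$ has mean zero, $\|m_r\|_{L^2}=1$, and $m_r\perp m_{r'}$ for $r\ne r'$ (their edge sets differ, so some $X_e$ appears to an odd power), so $\{m_1,\dots,m_N\}$ is an orthonormal family of mean-zero functions.

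Next I compute the Fourier coefficients $\langle \Tr[(A\circ X)^k],m_r\rangle$. Writing $S_r$ for the edge set of $m_r$, so $|S_r|=k$, a walk $(j_1,\dots,j_k)$ contributes to this inner product only if $\E\big[X_{j_1j_2}\cdots X_{j_kj_1}\,m_r\big]\ne 0$; by independence and symmetry this forces each edge of $S_r$ to occur an odd number of times and every other edge an even number of times in the walk, and since the walk has length $k=|S_r|$ it must use each edge of $S_r$ exactly once and no other edge. Thus the contributing walks are exactly the $k$ (resp.\ $2k$) rotations/reversals defining $m_r$, and each contributes the same amount $w_r:=a_{i_1i_2}\cdots a_{i_ki_1}$ (the cycle weight is rotation- and reversal-invariant, and $\E\big[\prod_{e\in S_r}X_e^2\big]=1$). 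Hence $\langle \Tr[(A\circ X)^k],m_r\rangle=k\,w_r$ in the i.i.d.\ case and $2k\,w_r$ in the symmetric case, and, because $\E[\Tr[(A\circ X)^k]]$ is a constant while $m_r\perp1$, the same value equals $\langle f,m_r\rangle$.

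Finally, Bessel's inequality gives $\var\big(\Tr[(A\circ X)^k]\big)=\|f\|_{L^2}^2\ge\sum_{r=1}^N\langle f,m_r\rangle^2$, i.e.\ at least $k^2\sum_r w_r^2$ (resp.\ $4k^2\sum_r w_r^2$). On the other hand, grouping the sum $\sum_{I_k}a_{i_1i_2}^2\cdots a_{i_ki_1}^2$ by cycle class shows it equals $k\sum_r w_r^2$ (resp.\ $2k\sum_r w_r^2$), since each class contains $k$ (resp.\ $2k$) tuples of $I_k$, each contributing $w_r^2$. Combining the two displays, $\var\big(\Tr[(A\circ X)^k]\big)\ge k\sum_{I_k}a_{i_1i_2}^2\cdots a_{i_ki_1}^2\ge\sum_{I_k}a_{i_1i_2}^2\cdots a_{i_ki_1}^2$, with an extra factor $k$ to spare. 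The low degrees $k=1,2$, where the ``distinct edges'' step fails in the symmetric model, I would settle by a direct computation of the variance. The one place calling for genuine care is the combinatorics of the middle paragraph: correctly identifying which closed walks feed each Fourier coefficient and tracking the rotation/reversal multiplicities ($k$ versus $2k$) uniformly across the i.i.d.\ and symmetric models; everything around it is just Hilbert-space geometry.
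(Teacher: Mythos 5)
Your Bessel-inequality argument is a correct and genuinely different route to the bound (for $k\ge 3$, and for all $k$ in the i.i.d.\ model). The paper expands $\var\big(\Tr[(A\circ X)^k]\big)$ as a double sum over all pairs of closed walks $(I_k',J_k')\in([n]^k)^2$, factors each covariance term (by independence across distinct edges) into products of moments of the $X_{ij}$, and proves every such term nonnegative using that symmetric random variables have vanishing odd moments and satisfy $\E[X^{\alpha+\beta}]\ge\E[X^{\alpha}]\E[X^{\beta}]$ for even exponents by H\"older; having shown all covariances nonnegative, it then keeps only the diagonal terms $I_k'=J_k'\in I_k$. You instead project the centred statistic onto the orthonormal family of cycle monomials $\{m_r\}$ and apply Bessel. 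This replaces the term-by-term positivity check with an orthogonality argument, and because each Fourier coefficient $\ip{f}{m_r}$ collects contributions from all $k$ (resp.\ $2k$) rotations/reversals of the cycle, you actually obtain the stronger bound $\var\ge k\sum_{I_k}a_{i_1i_2}^2\cdots a_{i_ki_1}^2$ (resp.\ $2k\sum_{I_k}\cdots$): Bessel retains the positive within-class off-diagonal covariances that the paper discards. The only extra cost is the combinatorial bookkeeping of which walks feed each Fourier coefficient, which you handle correctly.

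One gap worth naming concretely---and it is a genuine one, shared with the paper rather than introduced by your approach---is that deferring $k=2$ in the symmetric model to ``a direct computation'' does not close the case, because the claimed inequality can actually fail there. For $(i_1,i_2)\in I_2$ in the symmetric model $X_{i_1i_2}$ and $X_{i_2i_1}$ are the same random variable, so $\E[X_{i_1i_2}X_{i_2i_1}]=1\ne 0$ and $\E[X_{i_1i_2}^2X_{i_2i_1}^2]=\E[X_{i_1i_2}^4]\ne 1$; the paper's final step uses both of these incorrectly. A direct computation with zero-diagonal $A$ gives $\var\big(\Tr[(A\circ X)^2]\big)=4\big(\E[X_{11}^4]-1\big)\sum_{i<j}a_{ij}^4$, whereas $\sum_{I_2}a_{i_1i_2}^2a_{i_2i_1}^2=2\sum_{i<j}a_{ij}^4$, so the inequality requires $\E[X_{11}^4]\ge 3/2$. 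Since $c_1,c_2$ are unrestricted, the class $\mathcal L(c_1,c_2)$ contains symmetric variance-one laws with fourth moment arbitrarily close to $1$ (smooth approximations of the Rademacher law), so this moment bound is not guaranteed. Thus the low-degree symmetric cases need an added hypothesis rather than just a computation; your main argument for $k\ge 3$ is clean and correct.
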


\begin{proof} For a positive integer $k$, we have 
	\begin{align}\label{eqn:var1}
	&\var\l(\Tr(A\circ X)^k\r)=\E[\l(\Tr(A\circ X)^k\r)^2]-\E[\l(\Tr(A\circ X)^k\r)]^2\\
	=&\sum_{I_k',J_k'}\big(\E[X_{i_1i_2}\cdots X_{i_ki_1}X_{j_1j_2}\cdots X_{j_kj_1}]- \notag \\
	& \hspace{.5in}\E[X_{i_1i_2}\cdots X_{i_ki_1}]\E[X_{j_1j_2}\cdots X_{j_kj_1}]\big)a_{i_1i_2}\cdots a_{i_ki_1}a_{j_1j_2}\cdots a_{j_kj_1},\notag
	\end{align}
where $I_k',J_k'\in [n]^k$.	
	Note that, we have 
	\begin{align}\label{eqn:var2}
	&\E[X_{i_1i_2}\cdots X_{i_ki_1}X_{j_1j_2}\cdots X_{j_kj_1}]-\E[X_{i_1i_2}\cdots X_{i_ki_1}]\E[X_{j_1j_2}\cdots X_{j_kj_1}]\nonumber
	\\=&\prod_{i,j}\E[X_{ij}^{\alpha_{ij}+\beta_{ij}}]-\prod_{ij}\E[X_{ij}^{\alpha_{ij}}]
	\E[X_{ij}^{\beta_{ij}}]\ge 0,
	\end{align}
	for some $\alpha_{ij},\beta_{ij}\in \{0,1,2,\ldots\}$ and $i,j\in \{1,2,\ldots, n\}$.	 The last inequality follows from the fact that $\E[X_{ij}^{\alpha_{ij}+\beta_{ij}}]\ge \E[X_{ij}^{\alpha_{ij}}]
	\E[X_{ij}^{\beta_{ij}}]$, as $X_{ij}$ are symmetric random variables. Indeed, if $\alpha_{ij}+\beta_{ij}$ is odd then $\E[X_{ij}^{\alpha_{ij}+\beta_{ij}}]=0$ and one of $\E[X_{ij}^{\alpha_{ij}}]$ and $\E[X_{ij}^{\beta_{ij}}]$ is zero, as a symmetric random variable has odd moments zero. Suppose $\alpha_{ij}+\beta_{ij}$ is even, and both $\alpha_{ij},\beta_{ij}$ are odd, then $\E[X_{ij}^{\alpha_{ij}}]=\E[X_{ij}^{\beta_{ij}}]=0$. Finally if both $\alpha_{ij},\beta_{ij}$ are even, then  by H\"older inequality we have
	\begin{align*}
	\E[X_{ij}^{\alpha_{ij}}]\le (\E[X_{ij}^{\alpha_{ij}+\beta_{ij}}])^{\frac{\alpha_{ij}}{\alpha_{ij}+\beta_{ij}}}\; \mbox{and } \E[X_{ij}^{\beta_{ij}}]\le (\E[X_{ij}^{\alpha_{ij}+\beta_{ij}}])^{\frac{\beta_{ij}}{\alpha_{ij}+\beta_{ij}}}.
	\end{align*}
	Therefore  $\E[X_{ij}^{\alpha_{ij}+\beta_{ij}}]\ge \E[X_{ij}^{\alpha_{ij}}]
	\E[X_{ij}^{\beta_{ij}}]$. Since $a_{ij}$ are non-negative,  from \eqref{eqn:var1} and \eqref{eqn:var2} we get
	\begin{align*}
	\var\l(\Tr(A\circ X)^k\r)&\ge \sum_{I_k}\E[X_{i_1i_2}^2\cdots X_{i_ki_1}^2]a_{i_1i_2}^2\cdots a_{i_ki_1}^2=\sum_{I_k}a_{i_1i_2}^2\cdots a_{i_ki_1}^2.
	\end{align*}
	In the last line we used the fact that  $E[X_{i_1i_2}\cdots X_{i_ki_1}]=0$ and $\E[X_{i_1i_2}^2\cdots X_{i_ki_1}^2]=1$ for $(i_1,\ldots, i_k)\in I_k$, as $X_{ij}$ are i.i.d. random variables with mean zero and variance one. This completes the proof of the lemma.
\end{proof}

\begin{result}\cite[Theorem 1.2]{chafai2018poincar} \label{Result: Function of gaussian satisfies entropy inequality} Let $\mu$ be a probability measure on $\R^{n}$ such that $\mu(dx)\propto \exp\left\{\sum_{i=1}^{n}V(x_{i})\right\}$. Assume that $V:\R\to\R$ is a $\kappa$-convex function i.e., $V(x)-\kappa x^{2}/2$ is a convex function for some fixed $\kappa >0$. Then for any $g\in H^{1}(\mu)$,
	\begin{align*}
	\text{Ent}(g^{2})\leq \frac{2}{\kappa}\int_{\R^{n}}|\nabla g|^{2}\;d\mu,
	\end{align*}
	where $\text{Ent}(g)$ is defined by
	\begin{align*}
	\text{Ent}(g):=\int_{\R^{n}} g\log g\;d\mu-\left(\int_{\R^{n}}g\;d\mu\right)\log \left(\int_{\R^{n}}g\;d\mu\right)
	\end{align*}
	for any $g>0$.
	\end{result}

\begin{result}\cite[Proposition 2.3]{ledoux1999concentration}\label{Lemma: Entropy inequality implies the tail bound} Let $\{X_1,X_2, \ldots,X_k\}$ be a collection of random variables with $\E[e^{\lambda X_i}]<\infty$ for all $\lambda\in\R$ and 
	\begin{align*}
	\text{Ent}(e^{X_i})\leq c^{2}\E[e^{X_i}]  \ \mbox{ for }\ 1\leq i \leq k.
	\end{align*} 
	Suppose $F:\mathbb R^k \to \R$ be  a Lipschitz function with $\|F\|_{\text{Lip}}\leq 1$. Then
	\begin{align*}
	\P\left(F(X_1,\ldots,X_k)\geq \E[F(X_1,\ldots,X_k)]+t\right)\leq e^{-t^{2}/c^{2}},
	\end{align*}
	for every $t\geq 0$.
\end{result}

\begin{result}\cite[Corollary 3.5]{bandeira2016sharp}\label{ft:norm}
	Let $X$ be an $n\times n$ symmetric matrix with $X_{ij}=\xi_{ij}b_{ij}$,
	where $\{\xi_{ij}: i\ge j\}$ are independent centred random variables and $\{b_{ij} :  i\ge j\}$
	are given scalars. If $\E[\xi_{ij}^{2p}]^{\frac{1}{2p}}\le K p^{ \frac{\beta}{2}}$ for some $K,\beta>0$ and all $p,i,j$ then
	\begin{align*}
	\E\|X\|\lesssim \max_{i}\sqrt{\sum_{j}b_{ij}^{2}} +\max_{ij}|b_{ij}| (\log n)^{\frac{\max\{\beta, 1\}}{2}}.
	\end{align*}
	The universal constant in the above inequality depends on $\beta$ only.
\end{result}

\begin{lemma}\label{Lemma: Norm of the Matrix} Let $X=(X_{ij})_{n\times n}$ be a random matrix, where $\{(X_{ij},X_{ji}): 1\leq i,j\leq n\}$ be a collection of independent centered random vectors with $\E[X_{ij}^2]=\E[X_{ji}^2]=1$, $\E[X_{ij}X_{ij}]=\rho\in \{0, 1\}$ and $X_{ij}\in\CL(c_{1},c_{2})$, for some fixed $c_{1}>0,c_{2}>0$, $\forall\; i,j$. Let $A=(a_{ij})_{n\times n}$ be a fixed deterministic matrix such that $a_{ij}\in[0,1],$ and in addition, $a_{ij}=a_{ji}$ for all $i,j$ if $\rho=1$. Then 
	\begin{align}\label{Equaltion: Norm of the matrix}
	\P\left(\|A\circ X\|>K\max_{i,j}\l(\sqrt{\sum_{k}a_{ik}^{2}}+\sqrt{\sum_{k}a_{kj}^{2}}+|a_{ij}|\sqrt{\log n}\r)+t\right)
	\leq e^{-t^{2}/c_{1}^{2}},
	\end{align}
	where $K>0$ is an universal constant. In particular, with probability one
	\begin{align*}
	\|A\circ X\|\leq K\max_{i}\sqrt{\sum_{j}a_{ij}^{2}}+ K\max_{j}\sqrt{\sum_{i}a_{ij}^{2}}+(K\max_{ij}|a_{ij}|+\sqrt2 c_1)\sqrt{\log n}
	\end{align*}	
	for all but finitely many $n$.
\end{lemma}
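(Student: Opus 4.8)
The plan is to realize $\|A\circ X\|$ as a Lipschitz function of a vector of independent standard Gaussians, to deduce a sub-Gaussian deviation bound about its mean from the log--Sobolev concentration machinery (Results~\ref{Result: Function of gaussian satisfies entropy inequality}--\ref{Lemma: Entropy inequality implies the tail bound}), and then to control the mean itself by the sharp norm estimate of Result~\ref{ft:norm}. First I would use the definition of $\mathcal L(c_1,c_2)$ to write $X_{ij}=u_{ij}(Z_{ij})$ with $Z_{ij}$ standard Gaussian and $|u_{ij}'|\le c_1$; in the i.i.d.\ case all the $Z_{ij}$ are taken independent, and in the symmetric case one sets $Z_{ji}=Z_{ij}$, $u_{ji}=u_{ij}$, with $\{Z_{ij}:i\le j\}$ independent. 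Writing $\mathbf Z$ for the resulting vector of $N\in\{n^2,\,n(n+1)/2\}$ independent Gaussians, put $F(\mathbf Z)=\|A\circ X\|$.

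Next I would bound the Lipschitz constant of $F$. By the reverse triangle inequality for the operator norm, $\|\cdot\|\le\|\cdot\|_{\mathrm{HS}}$, and $|u_{ij}(x)-u_{ij}(y)|\le c_1|x-y|$,
\[
|F(\mathbf Z)-F(\mathbf Z')|^2\le\sum_{i,j}a_{ij}^2\bigl(u_{ij}(Z_{ij})-u_{ij}(Z'_{ij})\bigr)^2\le c_1^2\sum_{i,j}a_{ij}^2(Z_{ij}-Z'_{ij})^2 .
\]
Since $a_{ij}\le1$ and, in the symmetric case, every coordinate of $\mathbf Z$ occurs in exactly two entries, the right-hand side is at most $2c_1^2\|\mathbf Z-\mathbf Z'\|^2$, so $F$ is $\sqrt2\,c_1$-Lipschitz. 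Each coordinate of $\mathbf Z$ is standard Gaussian, which by Result~\ref{Result: Function of gaussian satisfies entropy inequality} (with $V(x)=x^2/2$, $\kappa=1$) satisfies $\text{Ent}(g^2)\le2\,\E|\nabla g|^2$ and hence the hypothesis of Result~\ref{Lemma: Entropy inequality implies the tail bound}; applying the latter to the $1$-Lipschitz function $F/(\sqrt2\,c_1)$ gives $\P\bigl(\|A\circ X\|\ge\E\|A\circ X\|+t\bigr)\le e^{-t^2/c_1^2}$ for all $t\ge0$.

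To turn this into \eqref{Equaltion: Norm of the matrix} I would bound $\E\|A\circ X\|$ by $K\max_{i,j}\bigl(\sqrt{\sum_k a_{ik}^2}+\sqrt{\sum_k a_{kj}^2}+|a_{ij}|\sqrt{\log n}\bigr)$ using Result~\ref{ft:norm}. In the symmetric case this applies directly with $\xi_{ij}=X_{ij}$ and $b_{ij}=a_{ij}$. In the i.i.d.\ case $A\circ X$ need not be symmetric, so I would pass to the $2n\times2n$ symmetric matrix with off-diagonal blocks $A\circ X$ and $(A\circ X)^t$, whose operator norm equals $\|A\circ X\|$ and whose row-wise sums of squares are precisely the quantities $\sum_j a_{ij}^2$ and $\sum_i a_{ij}^2$ (and $\log(2n)\asymp\log n$). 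The moment hypothesis of Result~\ref{ft:norm} holds with $\beta=1$: from $\E X_{ij}=0$ and $|u_{ij}'|\le c_1$ one gets $|u_{ij}(0)|\le c_1\sqrt{2/\pi}$, so $|X_{ij}|\le c_1(\sqrt{2/\pi}+|Z_{ij}|)$ is sub-Gaussian and $\E[X_{ij}^{2p}]^{1/2p}\lesssim c_1\sqrt p$. Result~\ref{ft:norm} then yields the claimed bound on $\E\|A\circ X\|$ (with $K$ absorbing the $c_1$-dependent factors), and inserting it into the deviation bound proves \eqref{Equaltion: Norm of the matrix}. For the almost sure statement I would take $t=t_n=\sqrt2\,c_1\sqrt{\log n}$: then $\sum_n e^{-t_n^2/c_1^2}=\sum_n n^{-2}<\infty$, so Borel--Cantelli together with $\max_{i,j}(f+g+h)\le\max f+\max g+\max h$ gives the asserted bound for all but finitely many $n$.

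I expect the only genuine obstacle to be the mean bound: one has to recognize that Result~\ref{ft:norm} should be applied through a symmetric dilation in the i.i.d.\ case, and to verify the sub-Gaussian moment growth $\E[X_{ij}^{2p}]^{1/2p}\lesssim c_1\sqrt p$, which is exactly what keeps the logarithmic correction at the power $\tfrac12$. The Lipschitz estimate and the concentration step are routine, and the bound $|u_{ij}''|\le c_2$ plays no role here (it is needed only for the second-order Poincar\'e argument used elsewhere).
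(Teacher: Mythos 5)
Your argument is correct and shares the paper's overall strategy: bound $\E\|A\circ X\|$ by the Bandeira--van Handel estimate (Result~\ref{ft:norm}), obtain sub-Gaussian concentration about the mean via the log-Sobolev machinery (Results~\ref{Result: Function of gaussian satisfies entropy inequality}--\ref{Lemma: Entropy inequality implies the tail bound}), and finish with Borel--Cantelli. You diverge from the paper at the hermitization step in the i.i.d.\ case: the paper decomposes $A\circ X$ into its symmetric and anti-symmetric parts, $A\circ X=(B_n^{+}\circ X^{+})+(B_n^{-}\circ X^{-})$ with $(B_n^{\pm})_{ij}=\sqrt{a_{ij}^{2}+a_{ji}^{2}}$, checks the moment hypothesis and independence for each piece, and applies Result~\ref{ft:norm} to each, whereas you pass to the $2n\times2n$ symmetric dilation with off-diagonal blocks $A\circ X$ and $(A\circ X)^{t}$. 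Your dilation is arguably cleaner: the dilated matrix is genuinely symmetric with independent below-diagonal entries, avoiding the extra step of treating the anti-symmetric piece as Hermitian after multiplication by $i$, and its row sums of squares are exactly $\sum_{j}a_{ij}^{2}$ and $\sum_{i}a_{ij}^{2}$. The concentration step is also packaged differently but equivalently: the paper first shows each $X_{ij}=u(Z_{ij})$ satisfies $\text{Ent}(e^{X_{ij}})\le c_1^2\E[e^{X_{ij}}]$ (taking $\kappa=1/2$ in Result~\ref{Result: Function of gaussian satisfies entropy inequality}) and then treats $M\mapsto\|M\|$ as $1$-Lipschitz in the $X_{ij}$'s; you instead work directly with the Gaussian coordinates, carrying a $\sqrt2\,c_1$ Lipschitz constant (the $\sqrt2$ from the symmetric double-count), and use the sharp Gaussian constant $\kappa=1$, and the factors cancel so both routes produce the tail $e^{-t^{2}/c_1^{2}}$. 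A small plus of your write-up: you derive $|u(0)|\le c_1\sqrt{2/\pi}$ from the centering hypothesis, so the sub-Gaussian moment constant in Result~\ref{ft:norm} really depends only on $c_1$; the paper stops at $C=|u(0)|+c_1$ without this reduction.
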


\begin{proof}
	This proof is based on ~\cite[Corollary 3.5]{bandeira2016sharp}. First of all if $\rho=1$, then both $A$ and $X$ are symmetric matrices. In that case, the result \ref{ft:norm} gives bound on $\E\|A\circ X\|$. If $\rho=0$,  let us write the matrix $A\circ X$ in the following way.
	
	\begin{align*}
	A\circ X&=\frac{1}{2}[(A\circ X)+(A\circ X)^{t}]+\frac{1}{2}[(A\circ X)-(A\circ X)^{t}]\\
	&:=(B_{n}^{+}\circ X^{+})+(B_{n}^{-}\circ X^{-}),
	\end{align*}
	where 
	\begin{align*}
	(B_{n}^{+})_{ij}&:=\sqrt{a_{ij}^{2}+a_{ji}^{2}}, & (B_{n}^{-})_{ij}&:=\sqrt{a_{ij}^{2}+a_{ji}^{2}},\\
	(X^{+})_{ij}&:=(a_{ij}X_{ij}+ a_{ji}X_{ji})/(2B_{n}^{+})_{ij},& (X^{-})_{ij}&:=(a_{ij}X_{ij}- a_{ji}X_{ji})/(2B_{n}^{-})_{ij}.
	\end{align*}

	In the above definitions, we use the convention that $(X^{+})_{ij}=0=(X^{-})_{ij}$ if both $a_{ij}=0=a_{ji}$. Since $X_{ij}\in \CL(c_{1},c_{2})$, we may write 
	\begin{align*}
	X_{ij}=u(0)+u'(\theta_{ij})Z_{ij},
	\end{align*}
	where $Z_{ij}\sim N(0,1)$, and $\theta_{ij}$ lies in between $0$ and $Z_{ij}$. Since $X_{ij}\in\CL(c_{1},c_{2})$, we have $|u'(\theta_{Z})|\leq c_{1}$. Therefore for any $p\in \mathbb{N}$,
	\begin{align*}
	\E[X_{ij}^{2p}]^{1/2p}\leq |u(0)|+c_{1}\E[Z^{2p}]^{1/2p}\leq |u(0)|+c_{1}\sqrt{p}\leq C\sqrt{p},
	\end{align*}
	where $C=|u(0)|+c_{1}$. Consequently by Minkowski's inequality, we have 
	\begin{align*}
	\E[(X^{\nu})_{ij}^{2p}]^{1/2p}&\leq \frac{C}{|2(B_{n}^{\nu})_{ij}|}[|a_{ij}|+|a_{ji}|]\sqrt{p}\leq C\sqrt{p},\;\;\;\nu\in \{+,-\},\\
	\end{align*}
	
	where the last inequality follows from the fact that $|a_{ij}|+|a_{ji}|\leq |2(B_{n}^{\nu})_{ij}|$ for $\nu\in\{+,-\}$.  Hence using result \ref{ft:norm}, we conclude that for $\nu\in \{+,-\}$,
	\begin{align*}
	\E\|B_{n}^{\nu}\circ X^{\nu}\|&\leq K\left[\max_{i}\sqrt{\sum_{j}(B_{n}^{\nu})_{ij}^{2}}+\max_{ij}|(B_{n}^{\nu})_{ij}|\sqrt{\log n}\right]\\
	&\leq K\max_{i}\sqrt{\sum_{j}a_{ij}^{2}}+K\max_{j}\sqrt{\sum_{i}a_{ij}^{2}}+K\max_{ij}|a_{ij}|\sqrt{\log n},
	\end{align*}
	where $K>0$ is a universal constant.
	
To prove the almost sure bound, we invoke the results \ref{Result: Function of gaussian satisfies entropy inequality}, and \ref{Lemma: Entropy inequality implies the tail bound}.  If $Z\sim N(0,1)$ and $u:\R\to\R$ is a differentiable function with $|u'|\leq c_{1}$ uniformly, then taking $\kappa=1/2$ in result \ref{Result: Function of gaussian satisfies entropy inequality}
		\begin{align*}
		\text{Ent}\left(e^{u(Z)}\right)\leq 4\E\left[\left|\frac{u'(Z)}{2}\right|^{2}e^{u(Z)}\right]\leq c_{1}^{2}\E\left[e^{u(Z)}\right].
		\end{align*}

	Now since $\|M\|\leq \sqrt{\sum_{i,j}M_{ij}^{2}}$, $M\mapsto\|M\|$ is a Lipschitz function with Lipschitz constant equal to $1$. Therefore, the estimate \eqref{Equaltion: Norm of the matrix} follows from the result \ref{Lemma: Entropy inequality implies the tail bound}.
\end{proof}

\subsection{Proof of the Theorem \ref{thm: Non-symmetric matrix - Total variation}}\label{proof of main result}
We first introduce some notations which will be used in the proof of Theorem \ref{thm: Non-symmetric matrix - Total variation}.

\begin{align*}
f(z)&=\sum_{k=0}^{\infty}d_kz^k, \; f_1(z)=\sum_{k=1}^{\infty}k|d_k|z^{k-1}, \; f_2(z)=\sum_{k=2}^{\infty}k(k-1)|d_k|z^{k-2},\\
\mathcal L_f(X)&=\sum_{i=1}^{n}f(\lambda_i), \mbox{where $\lambda_1,\ldots, \lambda_n$ are the eigenvalues of $X$}.
\end{align*}
Let $\mathcal J$ be a finite index set. Define
\begin{align*}
\mathcal R &= \{\alpha \in \C^{\mathcal J}\; : \; \sum_{u\in \mathcal J}|\alpha_u|^2=1 \},\;\;\;
\mathcal S = \{ \beta\in \C^{n\times n} \; : \; \sum_{i,j=1}^n |\beta_{i,j}|^2=1 \}.
\end{align*}
Let $H(x)=(h_{ij})$ be an $n\times n$ matrix, where $h_{ij}:\R^{\mathcal J}\to \C$ are $C^2$ maps for $1\le i,j\le n$. Define three functions $\gamma_0, \gamma_1$ and $\gamma_2$ on $\R^{\mathcal J}$ as follows
\begin{align*}
 \gamma_0(x)&=\sup_{u\in \mathcal J,\|B\|=1}\l |\Tr\left(B\frac{\partial H}{\partial x_u}\right)\r|,\\
\gamma_1(x)&=\sup_{\alpha\in \mathcal R,\beta \in \mathcal S}\l | \sum_{u\in \mathcal J}\sum_{i,j}\alpha_u\beta_{ij}\frac{\partial h_{ij}}{\partial x_u}\r|, \\
\gamma_2(x)&=\sup_{\alpha,\alpha' \in \mathcal R,\beta \in \mathcal S}\l| \sum_{u,v\in \mathcal J}\sum_{i,j}\alpha_u\alpha_u'\beta_{ij}\frac{\partial^2h_{ij}}{\partial x_u\partial x_v}\r|
\end{align*}
Let $\lambda(x)=\|H(x)\|, \; \; r(x)=\mbox{rank}(H(x))$. Define a few more functions
\begin{align*}
\eta_0(x)&=\gamma_0(x)f_1(\lambda(x)),\;\; \eta_1(x)=\gamma_1(x)f_1(\lambda(x))\sqrt{r(x)},\\
\eta_{2}(x)&=\gamma_2(x)f_1(\lambda(x))\sqrt{(r(x))}+\gamma_1(x)^2f_2(\lambda(x)),\\
\kappa_0&=\l(\E\eta_0(x)^2\eta_1(x)^2\r)^{\frac{1}{2}},\ \kappa_1=\l(\E \eta_1(x)^4\r)^{\frac{1}{4}},\ \kappa_2 =\l(\E\eta_2(x)^4\r)^{\frac{1}{4}}.
\end{align*}
The following result from \cite{chatterjee2009fluctuations} is  the main ingredient of our proof of Theorem \ref{thm: Non-symmetric matrix - Total variation}.

\begin{result}\cite[Theorem 3.1]{chatterjee2009fluctuations} \label{re:ch}
Let all notation be as above. Suppose $W=\mbox{Re}\Tr f(H(x))$ has finite fourth moment and let $\sigma^2=\var(W). $ Let $Z$ be a random variable with the same mean and variance as $W$. Then
$$
d_{TV}(W,Z)\le \frac{2\sqrt 5(c_1c_2\kappa_0+c_1^3\kappa_1\kappa_2)}{\sigma^2}.
$$
\end{result}

In our setting, $H_n=A\circ X$ and $\mathcal J=\{(i,j): 1\le i,j\le n\}$ when $X$ is an iid matrix. We will use Result \ref{re:ch} to prove the theorem and for that we first estimate $ \kappa_0,\kappa_1$ and $\kappa_2$ for $A\circ X$. Note
\begin{align*}
\gamma_0(x)&=\sup_{u\in \mathcal J,\|B\|=1}\l |\Tr\left(B\frac{\partial (A\circ X)}{\partial x_u}\right)\r|\le \max |a_{ij}|,\\
\gamma_1(x)&=\sup_{\alpha\in \mathcal R,\beta \in \mathcal S}\l | \sum_{u\in \mathcal J}\sum_{I_2}\alpha_u\beta_{ij}\frac{\partial (a_{ij}x_{ij)}}{\partial x_u}\r|\le \max |a_{ij}|, \\
\gamma_2(x)&=\sup_{\alpha,\alpha' \in \mathcal R,\beta \in \mathcal S}\l| \sum_{u,v\in \mathcal J}\sum_{i,j}\alpha_u\alpha_u'\beta_{ij}\frac{\partial^2(a_{ij}x_{ij})}{\partial x_u\partial x_v}\r|=0.
\end{align*}
Let $f(z)=P_k(z)=\sum_{i=0}^{n}c_iz^i$, where $c_0,\ldots, c_k\in \R$ with $c_k\neq 0$.   Then, for $\lambda \ge 1$, 
\begin{align}\label{eqn:ff1f2}
	f(\lambda)&=\lambda^k\sum_{i=0}^{k}c_i\lambda^{i-k}\le k\lambda^k \max\{|c_0|,\ldots, |c_k|\}\lesssim k\lambda^k,\nonumber\\
	f_1(\lambda)&=k\lambda^{k-1}\sum_{i=1}^{k}|c_i|\frac{i}{k}\lambda^{i-k}\lesssim k^2\lambda^{k-1},
	\\f_2(\lambda)&=k(k-1)\lambda^{k-2}\sum_{i=2}^{k}|c_i|\frac{i(i-1)}{k(k-1)}\lambda^{i-k}\lesssim k^3\lambda^{k-2}.\nonumber
\end{align}
Then we have 
\begin{align*}
\eta_0(x)&=\gamma_0(x)f_1(\lambda_n(x))\lesssim \max |a_{ij}|k^2 \lambda_n(x)^{k-1},\\
\eta_1(x)&=\gamma_1(x)f_1(\lambda_n(x))\sqrt{r_n(x)}\lesssim \max |a_{ij}| k^2 \lambda_n(x)^{k-1}\sqrt {r_n(x)},\\
\eta_{2}(x)&=\gamma_2(x)f_1(\lambda_n(x))\sqrt{(r_n(x))}+\gamma_1(x)^2f_2(\lambda(x))\lesssim (
\max |a_{ij}|)^2k^3\lambda_n(x)^{k-2},
\end{align*}
where $\lambda_n=\|A\circ X\|$. Again, note that, Lemma \ref{Lemma: Norm of the Matrix}  implies that $\lambda_n\le \sqrt{b_n}$. Since  $\mbox{rank}(A\circ X)= r_{n}\le n$ almost surely
\begin{align*}
\kappa_0&=\l(\E\eta_0(x)^2\eta_1(x)^2\r)^{\frac{1}{2}}\lesssim (\max |a_{ij}|)^2k^4b_{n}^{k-1} \sqrt {n} ,\;\;\mbox{almost surely},\\
\kappa_1&=\l(\E \eta_1(x)^4\r)^{\frac{1}{4}}\lesssim  (\max |a_{ij}| )k^2 b_n^{\frac{k-1}{2}}\sqrt {n},\;\;\mbox{almost surely},\\
\kappa_2 &=\l(\E\eta_2(x)^4\r)^{\frac{1}{4}}\lesssim (\max|a_{ij}|)^2k^3b_n^{\frac{k-2}{2}}, \;\;\mbox{almost surely}.
\end{align*}
Therefore from Result \ref{re:ch} and Lemma \ref{lem:variance}, we have 
\begin{align}\label{eqn:bound}
d_{TV}(W_n,Z_n)\lesssim \frac{ (\max |a_{ij}|)^2k^5 b_{n}^{k-1} \sqrt {n}. }{\sigma_n^2}
\end{align}
This completes proof of the Theorem \ref{thm: Non-symmetric matrix - Total variation} for non-symmetric matrix. 

For the symmetric case, that is, when both $X$ and $A$ are symmetric,  proof goes in the similar line. In this case  also $\|A\circ X\|\lesssim \sqrt {b_n}$ a.s. and it follows immediately from \cite[Corollary 3.12]{bandeira2016sharp}. Here we skip the details.

\section{Appendix}\label{sec: Appendix}
Here we give the definitions of broadly connected and super regular variance profiles mentioned in Remark \ref{rem: specially structred matrices}. For further information and results  on these variance profiles, we refer to \cite{cook2016limiting}. Let $A$ be an $n\times m$ deterministic matrix with non-negative entries $a_{ij}$ and  define 
\begin{align*}
\mathcal N_A(i)&=\{j\in [m]\; :\; a_{ij}>0\},\\
e_A(I,J)&=\l|\{(i,j)\in [n]\times [m]\; : \; a_{ij}>0 \}\r|.
\end{align*}
\begin{definition}[Broad connectivity]
	Let $A=(a_{ij})$ be an $n\times m $ matrix with non-negative entries. For $I\subset [n]$ and $\delta\in (0,1),$ define the set $\delta$-broadly connected neighbours of $I$ as 
	$$
	\mathcal N_A^{(\delta)}(I)=\{j\in [m]\; : \; |\mathcal N_{A^T}(j)\cap I|\ge \delta |I|\}.
	$$
	For $\delta, \nu\in (0,1)$, we say that $A$ is $(\delta, \nu)$-broadly connected if 
	\begin{enumerate}
		\item[(i)] $|\mathcal N_{A}(i)|\ge \delta m$, for all $i\in [n]$,
		
		\item[(ii)] $|\mathcal N_{A^T}(j)|\ge \delta n$ for all $j\in [m]$,
		
		\item[(iii)] $|\mathcal N_{A^T}^{\delta}(J)|\ge \min(n, (1+\nu )|J|)$ for $|J|\subset [m]$.
	\end{enumerate}
\end{definition}

\begin{definition}[Super regularity]
	Let $A$ be an $n\times m $ matrix with non-negative entries, for $\delta, \epsilon\in (0,1)$, we say that $A$ is $(\delta, \epsilon)$-super regular if the following hold:
	\begin{enumerate}
		\item[(i)] $|\mathcal N_A(i)|\ge \delta m$ for all $i\in [n]$,
		
		\item[(ii)] $|\mathcal N_{A^T}(j)|\ge \delta n$ for all $j\in [m]$,
		
		\item[(iii)] $e_A(I,J)\ge \delta |I||J|$ for all $I\subset [n], J\subset [m]$ such that $|I|\ge \epsilon n$ and $|J|\ge \epsilon m$.
	\end{enumerate}
\end{definition}

\bibliographystyle{amsplain}

\end{document}